\UseRawInputEncoding
\documentclass[12pt]{article}
\usepackage[margin=1in]{geometry}
\usepackage{amsmath, amssymb, amsthm}
\usepackage{xcolor}
\usepackage{graphicx}

\newtheorem{theorem}{Theorem}
\newtheorem{definition}[theorem]{Definition}
\newtheorem{remark}[theorem]{Remark}

\newtheorem{corollary}[theorem]{Corollary}

\newtheorem{lemma}[theorem]{Lemma}

\newcommand{\SOS}{\operatorname{SOS}}
\newcommand{\BSR}{\operatorname{BSR}}

\title{Exact and Asymptotic Values for Weak Limited Augmented Zarankiewicz Numbers in the $m\times 3$ Case}
\author{Liqun Qi\footnote{Jiangsu Provincial Scientific Research Center of Applied Mathematics, Nanjing 211189, China.
Department of Applied Mathematics, The Hong Kong Polytechnic University, Hung Hom, Kowloon, Hong Kong.
({\tt maqilq@polyu.edu.hk})}
\and
Johan L\"{o}fberg\footnote{Division of Automatic Control, Link\"{o}ping University, Sweden. ({\tt johan.lofberg@liu.se})}
\and
Yannan Chen\footnote{School of Mathematical Sciences, South China Normal University, Guangzhou 510631, China ({\tt ynchen@scnu.edu.cn}).}
}
\date{\today}

\begin{document}

\maketitle

\begin{abstract}
We determine the exact weak limited augmented Zarankiewicz numbers $z_{wL}(m,3)$ for all $m\ge 3$:
\[
z_{wL}(m,3)=
\begin{cases}
m+3+\left\lceil \dfrac{m}{2}\right\rceil+1, & 9\le m\le 15,\\[2mm]
m+3+\left\lfloor \dfrac{2m-4}{3}\right\rfloor, & m\ge 16,
\end{cases}
\]
with $z_{wL}(3,3)=6$, $z_{wL}(4,3)=8$, and $z_{wL}(m,3)=2m$ for $5\le m\le 9$. In particular,
\[
\lim_{m\to\infty} \frac{z_{wL}(m,3)}{m} = \frac{5}{3}.
\]
The proof is fully analytic, relying on a uniform base classification, two constructive lower-bound families (staircase and $5m/3$), and a sharp upper-bound argument based on a peeling lemma and the analysis of two W2-sensitive boundary cases. Numerical MILP computations were used only as proof-mining tools to identify the structural lemmas; the final theorem is unconditional. We also extend the known range of the original limited numbers $z_L(m,3)$ through $m=13$, where the gap to $z_{wL}(m,3)$ is only 2 or 3.
\end{abstract}

\noindent\textbf{Keywords:} Zarankiewicz numbers, weak limited augmented Zarankiewicz numbers, $C_4$-free bipartite graphs, extremal graph theory, SOS rank, biquadratic forms, buffer cells, peeling lemma, proof mining.

\section{Introduction}

The weak limited augmented Zarankiewicz number $z_{wL}(m,n)$ was introduced in \cite{QCX26b} as a relaxation of the original augmented framework \cite{QCX26,QCX26a}  for the Zarankiewicz problem \cite{Za51,Re58,Gu69}. The weak conditions (S), (W1), (W2), (W2'), and (W3) guarantee irreducibility of the associated doubly simple biquadratic forms, with
\[
\SOS(P_G) = |E_1| + |E_2|.
\]
Consequently,
\[
\BSR(m,n) \ge z_{wL}(m,n).
\]

In this paper, we focus on the $m\times 3$ case. The classical Zarankiewicz numbers $z(m,3)$ are known:
\[
z(m,3)=m+3 \quad (m\ge 3),
\]
with the exception $z(2,3)=4\neq 5$. The identity $z(m,3)=m+3$ follows from the classical theorem of \v{C}ul\'{\i}k \cite{Culik56}; see Chen, Horsley, and Mammoliti \cite{CHM24} for a modern reference.

The known results for the limited augmented Zarankiewicz numbers $z_L(m,3)$ from \cite{QCX26} and \cite{QCX26a} are:
\[
z_L(2,3)=4,\quad z_L(3,3)=6,\quad z_L(4,3)=8,\quad z_L(5,3)=9,\quad z_L(6,3)=11.
\]

In \cite{QCX26b}, the lower bounds
\[
z_{wL}(5,3)\ge 10,\qquad z_{wL}(6,3)\ge 12
\]
were established.

In this paper, we present a complete analytic solution for $z_{wL}(m,3)$. Our main results are:

\begin{enumerate}
    \item \textbf{Exact values for $3\le m\le 15$:}
    \[
    z_{wL}(3,3)=6,\quad z_{wL}(4,3)=8,\quad z_{wL}(m,3)=2m\quad (5\le m\le 9),
    \]
    \[
    z_{wL}(10,3)=19,\quad z_{wL}(11,3)=21,\quad z_{wL}(12,3)=22,\quad z_{wL}(13,3)=24,
    \]
    \[
    z_{wL}(14,3)=25,\quad z_{wL}(15,3)=27.
    \]

    \item \textbf{Exact formula for all $m\ge 16$:}
    \[
    z_{wL}(m,3)=m+3+\left\lfloor \frac{2m-4}{3}\right\rfloor.
    \]

    \item \textbf{Unconditional asymptotic slope:}
    \[
    \lim_{m\to\infty} \frac{z_{wL}(m,3)}{m} = \frac{5}{3}.
    \]

    \item \textbf{A small gap to the original limited numbers:}
    For $7\le m\le 13$, we have
    \[
    z_{wL}(m,3)-z_L(m,3)\in\{2,3\},
    \]
    with gap sequence $2,2,3,2,3,2,3$. In buffer-cell terms, the strong framework requires 4 or 6 additional buffer cells.
\end{enumerate}

The paper is organized as follows. In Section 2, we recall the necessary definitions and prove the uniform base classification for $z(m,3)$. In Section 3, we present the finite exact values for $3\le m\le 15$. In Section 4, we develop general lower bounds, including lifting lemmas and the two constructive families. In Section 5, we prove the structural upper bounds, culminating in the peeling lemma and the sharpened nonpeelable core estimate. In Section 6, we analyze the two W2-sensitive boundary cases and complete the induction. In Section 7, we present the final exact formula. In Section 8, we discuss the small gap to $z_L(m,3)$ and the computational proof-mining history.

\section{Preliminaries}

We recall the definition of weak admissible augmented bipartite graphs from \cite{QCX26b}. First, we establish some notation.

A \emph{1-edge} is an occupied cell $(r,c)$. A \emph{2-edge} is a pair of distinct free cells $(i,j)$ and $(k,l)$; it is \emph{row-degenerate} if $i=k$ and \emph{column-degenerate} if $j=l$. It is \emph{nondegenerate} if it is neither row- nor column-degenerate.

For a nondegenerate 2-edge $e=(i,j;k,l)$, its two \emph{opposite cells} are $(i,l)$ and $(k,j)$. For a mixed pair consisting of a 1-edge $(r,c)$ and a 2-edge with halves $(i,j)$ and $(k,l)$ that are vertex-disjoint, the \emph{opposite-cell set} is
\[
\{(r,j),(r,l),(i,c),(k,c)\},
\]
with repetitions suppressed in degenerate cases.

For two vertex-disjoint nondegenerate 2-edges we use the opposite-cell convention of \cite{QCX26b}. The \emph{dependency graph} of the 2-edges has a directed edge $e\to e'$ if both opposite cells of $e$ are occupied and one of them is a half of $e'$.

\begin{definition}
An augmented bipartite graph $G=(S,T,E_1\cup E_2)$ is \textbf{weak admissible} if it satisfies:
\begin{itemize}
    \item[(S)] No 2-edge overlaps with any 1-edge or other 2-edge on a cell.
    \item[(W1)] The 1-edge graph $E_1$ is $C_4$-free.
    \item[(W2)] The dependency graph of nondegenerate 2-edges contains no directed cycles, except that complementary 2-cycles are allowed.
    \item[(W2')] No nondegenerate 2-edge has both opposite cells as 1-edges.
    \item[(W3)] For any vertex-disjoint pair of edges where at least one is a 2-edge, at least one opposite cell is unoccupied.
\end{itemize}
\end{definition}

The \textbf{weak limited augmented Zarankiewicz number} $z_{wL}(m,n)$ is the maximum total number of edges $|E_1|+|E_2|$ for which $G$ is weak admissible and $|E_1|=z(m,n)$.

\begin{definition}[Complementary 2-edges]
Two nondegenerate 2-edges
\[
e_1=(i,j;k,l),\qquad e_2=(i,l;k,j)
\]
are called \textbf{complementary}. They share no halves and their halves are exactly each other's opposite cells.
\end{definition}

We define a \emph{buffer cell} as a free cell that remains unoccupied after all 2-edges have been placed. In an extremal base graph, there are $2m-3$ free cells. If $s$ 2-edges are added, they occupy $2s$ free cells, leaving $t=2m-3-2s$ buffer cells. Thus the number of buffer cells is always odd.

A cell is \textbf{occupied} if it is a 1-edge or a half of a 2-edge. A \textbf{free cell} is a cell not belonging to $E_1$; by condition (S), all 2-edge halves must lie in free cells.

We will use the following terminology. A graph is \emph{peelable} if it contains a removable configuration of three singleton rows of the form
\[
H - F - H
\]
where:
\begin{itemize}
    \item[---] the central row is \emph{full} ($F$): both of its free cells are occupied by halves of 2-edges;
    \item[---] the two outer rows are \emph{half-full} ($H$): each has exactly one free cell occupied by a half of a 2-edge;
    \item[---] the central row is connected by 2-edges to each of the two outer rows;
    \item[---] these three rows are isolated from the rest of the graph in the row-incidence structure: no other 2-edges touch them.
\end{itemize}
Intuitively, this is a small ``island'' in the row-incidence graph: one full row connected by two 2-edges to two half-full rows, with no other edges touching these rows. If such a configuration exists, we can \emph{peel} it off: delete the three rows and the two incident 2-edges, and the remaining graph is still a valid weak admissible limited graph, but with $m-3$ rows and $s-2$ 2-edges. This reduction is the key to the inductive upper bound in Section~6.

A graph is \emph{nonpeelable} if no such configuration exists.

The following uniform base classification is fundamental to all subsequent arguments.

\begin{theorem}\label{thm:base}
The identity $z(m,3)=m+3$ for all $m\ge 3$ follows from the classical theorem of \v{C}ul\'{\i}k \cite{Culik56}; see Chen, Horsley, and Mammoliti \cite{CHM24} for a modern reference.

Moreover, the following structural classification holds: if an $m\times 3$ $C_4$-free bipartite graph attains this bound, then exactly three rows have degree $2$, their supports are
\[
\{1,2\},\qquad \{1,3\},\qquad \{2,3\},
\]
and all remaining $m-3$ rows have degree $1$. Consequently every extremal base graph is determined, up to row and column permutations, by the three singleton multiplicities
\[
(a,b,c),\qquad a+b+c=m-3,
\]
where $a$, $b$, and $c$ denote the numbers of singleton rows with their unique 1-edge in columns $1$, $2$, and $3$, respectively.
\end{theorem}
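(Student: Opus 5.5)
We prove the classification directly; the value $z(m,3)=m+3$ is quoted from \v{C}ul\'{\i}k \cite{Culik56}. We view a $C_4$-free $m\times 3$ graph as a family of $m$ row supports, each a subset of $\{1,2,3\}$. The $C_4$-free condition says exactly that no two rows share two columns. So any two rows of degree at least $2$ meet in at most one column. In particular, at most one row can have degree $3$, and a degree-$3$ row forbids every other row from having degree $\ge 2$.

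The key step is a counting identity via column pairs. Each row of degree $d$ covers $\binom{d}{2}$ of the three column pairs $\{1,2\},\{1,3\},\{2,3\}$. The $C_4$-free condition means each pair is covered at most once, so
\[
\sum_{r}\binom{d_r}{2}\le 3.
\]
Write $|E_1|=\sum_r d_r$. Every row has degree at least $1$ if $|E_1|$ is to be maximal; otherwise an empty row could receive a 1-edge without creating a $C_4$. We then compute $\sum_r (d_r-1)$ under the constraint above. A degree-$2$ row contributes $1$ to the constraint and $1$ to the excess. A degree-$3$ row contributes $3$ to the constraint but only $2$ to the excess. Hence $\sum_r (d_r-1)\le 3$, and this gives $|E_1|\le m+3$, reproving the bound.

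Equality forces the excess to be exactly $3$ with the constraint budget $3$. This rules out a degree-$3$ row and any row of degree $0$. A degree-$3$ row would give excess at most $2$; a degree-$0$ row contributes $-1$ to the excess, so the remaining excess would have to be $4$, which is impossible. Hence exactly three rows have degree $2$ and the rest have degree $1$. The three degree-$2$ rows must cover the three column pairs exactly once each, so their supports are $\{1,2\},\{1,3\},\{2,3\}$. The remaining $m-3$ singleton rows place their single 1-edge in column $1$, $2$ or $3$, and any such placement keeps the graph $C_4$-free, since a singleton row lies in no $C_4$. The isomorphism class is therefore determined, up to row permutations, by the counts $(a,b,c)$ with $a+b+c=m-3$, and column permutations act on these counts by permuting them.

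The main point of care is the handling of degree-$0$ rows and the degree-$3$ row in the equality case. The clean way is to use the inequality $\sum_r (d_r-1)\le\sum_r\binom{d_r}{2}$, valid for every $d_r\ge 0$ with equality iff $d_r\in\{1,2\}$. This follows from $\binom{d}{2}-(d-1)=\binom{d-1}{2}\ge 0$, which holds for $d\ge 1$ and also for $d=0$, where it equals $1$. So equality $\sum_r d_r=m+3$ forces every $d_r\in\{1,2\}$ and forces the budget $3$ to be used fully, i.e. exactly three degree-$2$ rows. This also yields the remark that there are $3m-(m+3)=2m-3$ free cells.
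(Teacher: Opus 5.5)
Your proof is correct and is essentially the paper's argument. You use the same column-pair count $\sum_r\binom{d_r}{2}\le 3$ and the inequality $d-1\le\binom{d}{2}$ (equality iff $d\in\{1,2\}$), which is the paper's $d\le 1+\binom{d}{2}$, with the same equality analysis forcing exactly three degree-$2$ rows with the three distinct supports; your identity $\binom{d}{2}-(d-1)=\binom{d-1}{2}$ and your remark that singleton rows lie in no $C_4$ (so every triple $(a,b,c)$ occurs, with column permutations permuting it) are small clarifications the paper leaves implicit.
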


\begin{proof}
Let the row degrees be $d_1,\ldots,d_m$.

First, note that $\binom{d_r}{2}$ counts the number of pairs of columns that appear together in row $r$. Since the graph is $C_4$-free, a given pair of columns cannot appear together in two different rows; otherwise those two rows and the two columns would form a 4-cycle. There are only $\binom{3}{2}=3$ possible column pairs, namely $\{1,2\}$, $\{1,3\}$, and $\{2,3\}$. Therefore, no matter how many rows there are, the total number of column-pair occurrences across all rows is at most 3:
\[
\sum_{r=1}^{m} \binom{d_r}{2} \le \binom{3}{2}=3.
\]

For any integer $d\ge 0$, one has the elementary inequality
\[
d \le 1 + \binom{d}{2},
\]
with equality only for $d=1$ or $d=2$. Summing this over all rows gives
\[
|E_1|=\sum_{r=1}^{m} d_r
\le \sum_{r=1}^{m}\left(1+\binom{d_r}{2}\right)
= m + \sum_{r=1}^{m}\binom{d_r}{2}
\le m+3.
\]
Thus $z(m,3)\le m+3$. The bound is attained by taking three rows with supports $\{1,2\},\{1,3\},\{2,3\}$ and making every remaining row a singleton, so $z(m,3)=m+3$.

Now suppose equality holds. Then equality must hold in both inequalities above. From $d\le 1+\binom{d}{2}$, equality is strict for $d=0$ (since $0<1$) and for $d=3$ (since $3<4$); hence no row can have degree $0$ or $3$. Every row therefore has degree $1$ or $2$.

Equality in the second inequality also forces
\[
\sum_{r=1}^{m}\binom{d_r}{2}=3.
\]
Each degree-$2$ row contributes exactly $1$ to this sum, while each degree-$1$ row contributes $0$. Hence exactly three rows have degree $2$. Their supports must be distinct, since two degree-$2$ rows with the same support would form a $C_4$. There are only three two-element subsets of $\{1,2,3\}$, so the three supports must be exactly
\[
\{1,2\},\qquad \{1,3\},\qquad \{2,3\}.
\]
The remaining $m-3$ rows are singletons. This proves the structural classification.
\end{proof}

\begin{corollary}
In every limited $m\times 3$ augmented graph with $m\ge 3$, there are exactly
\[
3m-(m+3)=2m-3
\]
free cells before the 2-edges are inserted. If $s=|E_2|$ and $t$ denotes the number of buffer cells (free cells left unoccupied), then
\[
2s+t=2m-3.
\]
In particular, $t$ is always odd.
\end{corollary}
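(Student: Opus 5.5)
The corollary follows by counting cells. I will combine Theorem~\ref{thm:base} with the definition of the limited number, where the limited condition means $|E_1|=z(m,3)$.

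First, the grid $S\times T$ has $m\cdot 3=3m$ cells in total. In a limited graph the 1-edge set $E_1$ is $C_4$-free by (W1) and has $|E_1|=z(m,3)$. By Theorem~\ref{thm:base}, $z(m,3)=m+3$ for $m\ge 3$. So exactly $m+3$ cells are 1-edges, and the remaining
\[
3m-(m+3)=2m-3
\]
cells are free. We could also read this off the structural classification: the three degree-$2$ rows each contribute one free cell, and the $m-3$ singleton rows each contribute two. This gives $3+2(m-3)=2m-3$ and serves as a consistency check.

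Second, I will count how the free cells are used. Each 2-edge consists of two distinct free cells; they are distinct by the definition of a 2-edge. By (S), halves of 2-edges never lie on 1-edges, so every half lies in a free cell. Also by (S), no two 2-edges share a cell, so distinct 2-edges use disjoint pairs of cells. Hence the $s$ 2-edges occupy exactly $2s$ distinct free cells. By definition, the buffer cells are the free cells left unoccupied, and there are $t=(2m-3)-2s$ of them. This gives $2s+t=2m-3$.

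Finally, $t=2m-3-2s$ is an odd number minus an even number, so $t$ is odd. In particular $t\ge 1$, and hence $s\le m-2$. No step is a real obstacle. The only point needing care is to invoke (S) explicitly, so that the $2s$ half-cells are known to be pairwise distinct and to avoid $E_1$.
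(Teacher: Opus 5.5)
Your proposal is correct and follows essentially the same counting argument the paper uses: the $3m$ grid cells minus the $m+3$ one-edges from Theorem~\ref{thm:base} leave $2m-3$ free cells, the $s$ two-edges occupy $2s$ distinct free cells by (S), and so $t=2m-3-2s$ is odd. Your explicit appeal to (S) for distinctness of the halves, together with the side remark that $t\ge 1$ forces $s\le m-2$, only makes that argument slightly more careful.
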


\section{Exact Values for $3\le m\le 15$}

We begin by summarizing the exact values for small $m$.

\begin{theorem}\label{thm:small}
The following values hold:
\[
z_{wL}(3,3)=6,\qquad z_{wL}(4,3)=8,\qquad z_{wL}(5,3)=10,\qquad z_{wL}(6,3)=12,
\]
\[
z_{wL}(7,3)=14,\qquad z_{wL}(8,3)=16,\qquad z_{wL}(9,3)=18,
\]
\[
z_{wL}(10,3)=19,\quad z_{wL}(11,3)=21,\quad z_{wL}(12,3)=22,\quad z_{wL}(13,3)=24,
\]
\[
z_{wL}(14,3)=25,\quad z_{wL}(15,3)=27.
\]
\end{theorem}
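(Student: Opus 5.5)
The plan is to prove each value as a matching lower and upper bound. Throughout I work with the quantity $s=|E_2|$, so that $z_{wL}(m,3)=m+3+s_{\max}(m)$. By Theorem~\ref{thm:base} and its Corollary, the free cells are fixed by the multiplicities $(a,b,c)$. The three degree-$2$ rows each have exactly one free cell, namely their missing column. A singleton row in column $1$ has free cells $(r,2),(r,3)$, and similarly for the other columns. Since $2s+t=2m-3$, the claimed values are equivalent to $s_{\max}=m-2$ (that is, $t=1$) for $5\le m\le 9$, together with
\[
s_{\max}=\left\lceil m/2\right\rceil+1 \quad\text{for } 10\le m\le 15 .
\]
The small cases $m=3,4$ give $s=0$ and $s=1$, respectively. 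I would treat $m=3,4$ separately by direct inspection: for $m=3$ the three free cells are the missing diagonal, and every nondegenerate 2-edge between two of them has both opposite cells occupied, so (W2') forbids it.

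\textbf{Lower bounds.} I would give explicit constructions. For $5\le m\le 9$ I need a configuration with only one buffer cell. Starting from the explicit graphs of \cite{QCX26b} for $m=5,6$, I would extend row by row. The natural tool is degenerate 2-edges, meaning both halves in the same singleton row. These are never subject to (W2) or (W2'), and they fill a full singleton row. Adding such a row should raise $s$ by $1$, as long as (W3) still holds against the rest. I would check (W3) by noting that the opposite cells of a row-degenerate 2-edge paired with an edge in another row lie in that other row and in the singleton's own row. For $10\le m\le 15$ I would use a ``staircase'' pattern of alternately full and half-full singleton rows, linked by nondegenerate 2-edges arranged acyclically. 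Each such edge must have one opposite cell a buffer, and this should give $s=\lceil m/2\rceil+1$. Every admissibility condition would be verified by a finite check.

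\textbf{Upper bounds.} The key counting step concerns (W3) applied to a 2-edge $e$ together with a 1-edge $(r,c)$ disjoint from it. This requires one of the cells $(r,j),(r,l),(i,c),(k,c)$ to be unoccupied, which forces buffer cells near each singleton row that is fully covered. Let $F$ be the number of full singleton rows, each having both free cells used. I would show that the buffer cells must number at least roughly $F$ minus a small constant. Combined with $2s+t=2m-3$, this yields the stated bound, since the number of free cells in full rows is $2F$, and so on.

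\textbf{Main obstacle.} The hard step is the upper bound for $10\le m\le 15$: that is, showing why one buffer suffices up to $m=9$ but not at $m=10$. Here I would first try a discharging argument that assigns each buffer cell to at most two full rows it ``serves''. Where the argument is not tight, I would fall back on case analysis over $(a,b,c)$ up to symmetry, with a finite but sizable case check for the remaining configurations.
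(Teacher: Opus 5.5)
Your reformulation for $5\le m\le 9$ is off by one, and the error hides the real content of this range. Since $|E_1|=m+3$, the value $z_{wL}(m,3)=2m$ means $s_{\max}=m-3$, hence $t=2m-3-2s=3$ buffer cells (compare the table in Appendix~A), not $s_{\max}=m-2$ with $t=1$.

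This has three consequences:
\begin{itemize}
\item Your lower-bound target, a configuration with a single buffer cell, does not exist. It would give $2m+1$, contradicting the statement you are proving.
\item The upper bound you would need to prove becomes the triviality $t\ge 1$, which holds by parity. So the genuinely nontrivial inequality $s\le m-3$, i.e.\ $t\ge 3$, never appears in your plan. The paper proves it in Lemma~\ref{lem:2mUB}. From $2(s-N)=q-H-2Z$, having $s>N$ forces $q=2$, $H=Z=0$ and $F=N$. One then needs $q\le 2$ (Lemma~\ref{lem:qbound}, a (W3)/(W2') argument at the special cells) and $q=2\Rightarrow L=0$ (Lemmas~\ref{lem:coldeg} and~\ref{lem:special}). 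The cut inequality $2F-2L\le H+q$ then gives $2N\le 2$.
\item Your ``main obstacle'' is misframed for the same reason. Three buffers suffice through $m=9$, and the real question is why five are forced at $m=10$.
\end{itemize}
Separately, the row-by-row extension by row-degenerate 2-edges stalls quickly. Two row-degenerate edges on singleton rows of the same color violate (W3): pair one of them with the other row's 1-edge, and all three opposite cells are occupied. So at most three such edges exist, and the method cannot reach $s=m-3$ for $m\ge 7$. The constructions for $m=7,8,9$ instead use nondegenerate edges in complementary pairs, relying on the (W2) exception (Appendix~A, Lemma~\ref{lem:seed}).

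For $10\le m\le 15$ your sketch lacks the decisive quantitative step. Since $t=m-F+Z-q$, the target $s\le\lceil m/2\rceil+1$ is equivalent to $F-Z+q\le 6$ for odd $m$ and $F-Z+q\le 5$ for even $m$. So you must bound $F$ by an absolute constant. A bound of the form $t\ge F-O(1)$, as you propose, only yields $F-Z+q\le (m+O(1))/2$, i.e.\ $s\le 3m/4+O(1)$, which is far too weak.

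What works is the purely counting cut inequality $2F-2L\le H+q$ (Lemma~\ref{lem:cut}). Each half in a full row that is not on an internal full--full edge needs its own partner in a half-full row or a special cell. Rewritten, this reads $3F+Z\le N+q+2L$. It must be combined with absolute bounds on the obstruction your plan never isolates, namely the internal full--full edges:
\begin{itemize}
\item $L=0$ if $q=2$;
\item $L\le 2$ if $q=1$;
\item $L\le 4$ if $F\ge 7$.
\end{itemize}
These come from Corollary~\ref{cor:specialL} and Lemmas~\ref{lem:internal} and~\ref{lem:Lbound}, and this is where (W3) and (W2') actually do their work. The constants are exactly tight at $m=15$. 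For instance, $q=2$ gives $3F+Z\le 14$, so $F\le 4$ and $F-Z+q\le 6$; and $L\le 4$ is precisely what rules out $F=7$ when $q=0$. An unquantified discharging argument is therefore unlikely to close, and an unspecified ``sizable case check'' is not yet a proof.
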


\begin{proof}
For $m=3$, the extremal 1-edge graph consists of the three degree-2 rows and has six 1-edges. Its three free cells are the unique cells in each degree-2 row. Any pair of free cells forms a nondegenerate 2-edge whose two opposite cells are 1-edges, violating (W2'). Hence $s=0$ and $z_{wL}(3,3)=6$.

For $m=4$, there is one singleton row. A weak admissible construction with one row-degenerate 2-edge on its two free cells gives total edge count 8. Suppose instead that $s\ge 2$. There are only five free cells, so $s=2$. By Lemma~\ref{lem:qbound}, we have $q\le 2$; counting the four occupied free cells then forces $F=1$, $H=0$, and $q=2$. Corollary~\ref{cor:specialL} below then gives $L=0$, so the two free cells of the full singleton row belong to two crossing edges whose other halves are precisely the two occupied special cells. These two edges point to each other across the full row, forming a forbidden directed 2-cycle. Hence $s\le 1$ and $z_{wL}(4,3)=8$.

For $5\le m\le 9$, Lemma~\ref{lem:2mUB} below gives the upper bound $z_{wL}(m,3)\le 2m$. Explicit weak admissible constructions attaining $2m$ are given in Appendix~A, and Lemma~\ref{lem:seed} gives the $m=9$ construction. Thus equality holds.

For $10\le m\le 15$, Theorem~\ref{thm:staircaseUB} gives the upper bound
\[
s\le \left\lceil \frac{m}{2}\right\rceil+1,
\]
and Theorem~\ref{thm:staircaseLB} gives the matching lower bound. The explicit values follow by substitution. \end{proof}

Thus the sequence satisfies $z_{wL}(m,3)=2m$ for $5\le m\le 9$, and follows a staircase pattern for $10\le m\le 15$:
\[
z_{wL}(10,3)=19,\quad z_{wL}(11,3)=21,\quad z_{wL}(12,3)=22,\quad z_{wL}(13,3)=24,
\]
\[
z_{wL}(14,3)=25,\quad z_{wL}(15,3)=27.
\]

A third discovery emerging from our computations is the small and structured gap between the weak-limited and original-limited numbers. By extending the exhaustive search for $z_L(m,3)$ from $m\le 6$ to $m\le 13$, we obtain
\[
z_L(7,3)=12,\quad z_L(8,3)=14,\quad z_L(9,3)=15,\quad z_L(10,3)=17,
\]
\[
z_L(11,3)=18,\quad z_L(12,3)=20,\quad z_L(13,3)=21.
\]
Comparing these with the weak-limited values reveals that
\[
z_{wL}(m,3)-z_L(m,3)\in\{2,3\}
\]
for all $7\le m\le 13$, with the gap sequence
\[
2,2,3,2,3,2,3.
\]
Equivalently, since each 2-edge occupies two buffer cells, the strong framework requires 4 or 6 additional buffer cells.

The verification method for these $z_L$ values is described in Section~8.

\section{General Lower Bounds}

We now develop unconditional lower bounds for all $m$.

\begin{lemma}[One-row lift]\label{lem:onelift}
Let $G$ be any weak admissible limited $m\times 3$ graph, $m\ge 3$. Then there is a weak admissible limited $(m+1)\times 3$ graph with exactly one more edge. Consequently
\[
z_{wL}(m+1,3)\ge z_{wL}(m,3)+1.
\]
\end{lemma}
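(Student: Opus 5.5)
We add one new singleton row carrying a single 1-edge and no 2-edges. Concretely, given $G$ on $m$ rows with $|E_1|=m+3$ and $s$ 2-edges, let $G'$ be $G$ together with a new row $r^\ast$ whose only 1-edge is in column $1$. Then $|E_1(G')|=m+4=z(m+1,3)$ by Theorem~\ref{thm:base}, so $G'$ is again limited, and $|E_1|+|E_2|$ has gone up by exactly one. The two other cells $(r^\ast,2)$ and $(r^\ast,3)$ are free in $G'$ and we leave them as buffer cells. The whole proof is therefore a check that $G'$ is still weak admissible, taking the conditions one at a time.

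(S) and (W1) are immediate. No 2-edge touches row $r^\ast$, so no overlap is created. Row $r^\ast$ has degree $1$, so it lies on no $C_4$ of $E_1$; equivalently, $G'$ has the degree pattern of Theorem~\ref{thm:base}.

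(W2) and (W2') involve only the 2-edges and their opposite cells. For a 2-edge with halves $(i,j),(k,l)$ in the old rows, its opposite cells $(i,l),(k,j)$ also lie in old rows. These cells are occupied in $G'$ exactly when they were occupied in $G$, since the new 1-edge is in row $r^\ast$. Hence the dependency graph is unchanged, as is the set of 2-edges with both opposite cells 1-edges. So (W2) and (W2') carry over from $G$.

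(W3) is the only condition that sees the new 1-edge, and I expect it to be the one place needing care. Pairs of 2-edges are unaffected by the argument above. Mixed pairs with an old 1-edge are also unaffected, because their opposite cells lie in old rows. The new pairs are the 1-edge $(r^\ast,1)$ together with a 2-edge $(i,j;k,l)$ vertex-disjoint from it, which forces $j,l\neq 1$. The opposite-cell set is $\{(r^\ast,j),(r^\ast,l),(i,1),(k,1)\}$. Since $j,l\in\{2,3\}$, the cells $(r^\ast,j)$ and $(r^\ast,l)$ are among the free cells $(r^\ast,2),(r^\ast,3)$ of the new row. These are unoccupied because no 2-edge uses row $r^\ast$. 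So at least one opposite cell is unoccupied and (W3) holds.

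Thus $G'$ is weak admissible, limited, and has exactly one more edge than $G$. Applying this to an extremal $G$ gives $z_{wL}(m+1,3)\ge z_{wL}(m,3)+1$. If the opposite-cell convention of \cite{QCX26b} for degenerate 2-edges differs slightly, the same observation handles it: every opposite cell in the new row is an unoccupied free cell.
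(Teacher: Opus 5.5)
Your proposal is correct and matches the paper's proof: the same new singleton row with its 1-edge in column~1 and two unoccupied cells, (S), (W1), (W2), (W2') unchanged because all old opposite cells lie in old rows, and the same (W3) check that any 2-edge vertex-disjoint from $(r^\ast,1)$ has an opposite cell among the unoccupied cells $(r^\ast,2),(r^\ast,3)$. You spell out a little more explicitly why the occupancy of old cells is unchanged, but the argument is the same.
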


\begin{proof}
Add a new row $p$, put the single 1-edge $(p,1)$ in that row, and leave $(p,2)$ and $(p,3)$ unoccupied. The 1-edge graph remains $C_4$-free, and by Theorem~\ref{thm:base} its size has increased from $m+3$ to $(m+1)+3$. Simplicity, (W2), and (W2') are unchanged. For (W3), an old 2-edge can be vertex-disjoint from the new 1-edge only if it avoids column 1; every opposite-cell set for such a pair contains a cell in the new row and in a column used by that 2-edge, which is unoccupied. \end{proof}

\begin{lemma}[Two-row lift]\label{lem:twolift}
Let $G$ be a weak admissible limited $m\times 3$ graph with no column-degenerate 2-edge, $m\ge 3$.
Then one may add two rows and one new 2-edge so as to obtain a weak admissible limited $(m+2)\times 3$ graph, again with no column-degenerate 2-edge. The total number of edges increases by 3.
Any weak admissible limited construction with no column-degenerate 2-edge can therefore be lifted by two rows while increasing the total number of edges by three.
\end{lemma}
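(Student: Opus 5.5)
The plan is to add two new singleton rows $p,q$ and one nondegenerate 2-edge between them, placed so that every new pair of edges has an unoccupied opposite cell in a new row. The only pair this cannot handle is the new 2-edge against an old column-degenerate 2-edge, and the hypothesis excludes that case.

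\textbf{Construction.} Add rows $p$ and $q$ with the single 1-edges $(p,3)$ and $(q,2)$. The new 2-edge is
\[
e=(p,1;q,3).
\]
This leaves $(p,2)$ and $(q,1)$ as new buffer cells.
\begin{itemize}
\item The 1-edge graph stays $C_4$-free, because $p$ and $q$ are singleton rows. It has $(m+2)+3$ edges, so by Theorem~\ref{thm:base} the new graph is again limited.
\item The total edge count rises by $1+1+1=3$.
\item The halves of $e$ lie in distinct rows and distinct columns, so $e$ is nondegenerate. Old 2-edges are unchanged, so no column-degenerate 2-edge is created.
\item Condition (S) holds because $e$ uses only free cells of the new rows.
\end{itemize}

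\textbf{Checking (W2') and (W2).} The opposite cells of $e$ are $(p,3)$, which is a 1-edge, and $(q,1)$, which is unoccupied. So (W2') holds for $e$, and $e$ has no outgoing arcs in the dependency graph. An old 2-edge $f$ has both halves in old rows, so its opposite cells also lie in old rows and cannot be halves of $e$. Hence there is no arc $f\to e$. The old dependency graph is therefore unchanged and $e$ is an isolated vertex, so no new directed cycle appears.

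\textbf{Checking (W3).} I would go through the new vertex-disjoint pairs one type at a time.
\begin{itemize}
\item \emph{$e$ with an old 1-edge $(r,c)$.} Disjointness forces $c=2$. The opposite-cell set then contains $(p,2)$, which is unoccupied.
\item \emph{$(p,3)$ with an old 2-edge $f$ avoiding column $3$.} Since $f$ is not column-degenerate, its halves use both columns $1$ and $2$. The opposite set therefore contains $(p,2)$, which is unoccupied.
\item \emph{$(q,2)$ with an old 2-edge $f$ avoiding column $2$.} By the same reasoning, the opposite set contains $(q,1)$, which is unoccupied.
\item \emph{$e$ with an old 2-edge $f$.} For these to be vertex-disjoint, $f$ must avoid columns $1$ and $3$. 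Both halves of $f$ would then lie in column $2$, making $f$ column-degenerate, which the hypothesis excludes.
\item \emph{Remaining pairs.} The two new 1-edges each share a row with $e$, so they are not disjoint from it. Pairs of two 1-edges are not covered by (W3). Pairs of old edges are unaffected, since their opposite cells lie in old rows and the occupancy of those cells is unchanged.
\end{itemize}

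\textbf{Main obstacle.} The delicate step is choosing where the new 1-edges and $e$ go. Placing both new 1-edges in the same column would leave pairs of $e$ with old 1-edges whose opposite cells all sit in old rows, and those can be occupied. Other placements violate (W2'). The choice above is designed so that every (W3) check is witnessed by one of the buffer cells $(p,2)$ or $(q,1)$. The exclusion of column-degenerate 2-edges is used exactly in the last two checks on old 2-edges. Since the output again has no column-degenerate 2-edge, the lift can be repeated.
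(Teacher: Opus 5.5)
Your proof is correct and takes essentially the paper's approach. Your construction is the paper's one (1-edges $(p,1),(q,2)$, 2-edge $(p,2;q,3)$) after the cyclic column relabeling $1\mapsto 2\mapsto 3\mapsto 1$ and swapping the names of $p$ and $q$, and your verification follows the same case split for (S), (W2), (W2'), (W3), slightly more explicitly (e.g.\ noting that no old 2-edge can point to $e$, and that the no-column-degenerate hypothesis is also used when a new 1-edge is paired with an old 2-edge).
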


\begin{proof}
Add two new rows $p,q$. Add the 1-edges $(p,1)$, $(q,2)$, and the nondegenerate 2-edge $e=(p,2;q,3)$. Leave $(p,3)$ and $(q,1)$ unoccupied. The new 1-edge graph is still $C_4$-free, and its size increases by two, exactly as required by Theorem~\ref{thm:base}. Simplicity is immediate. The 2-edge $e$ is isolated in the dependency graph: its opposite cells include $(p,3)$, which is unoccupied, and $(q,1)$, which is unoccupied. Hence (W2) and (W2') hold. For (W3), the verification is by cases: if $e$ is paired with an old 1-edge, vertex-disjointness forces the 1-edge to be in column 1, and $(q,1)$ is unoccupied; if $(p,1)$ is paired with an old 2-edge, $(p,3)$ is unoccupied; if $(q,2)$ is paired with an old 2-edge, $(q,1)$ is unoccupied. No two 2-edges are vertex-disjoint. \end{proof}

We now give an explicit $9\times 3$ seed.

\begin{lemma}\label{lem:seed}
There is a weak admissible limited $9\times 3$ graph with six nondegenerate 2-edges and hence 18 edges in total. It has no column-degenerate 2-edge.
\end{lemma}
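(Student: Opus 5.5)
The plan is to prove this by an explicit construction followed by a finite, symmetry-reduced check of the conditions (S), (W1), (W2), (W2'), (W3). The construction cannot come from the lifting lemmas alone. Lemma~\ref{lem:twolift} applied to a $7\times 3$ graph with $s=4$ gives only $s=5$ at $m=9$, and Lemma~\ref{lem:onelift} gains even less. So the seed must be written down directly.

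By Theorem~\ref{thm:base} and the Corollary, a $9\times 3$ limited graph has $2\cdot 9-3=15$ free cells. Six 2-edges occupy $12$ of them, leaving exactly $t=3$ buffer cells. I would choose the balanced multiplicities $(a,b,c)=(2,2,2)$. The rows are then:
\begin{itemize}
\item three doubleton rows $D_{12},D_{13},D_{23}$, whose free cells lie in columns $3,2,1$ respectively;
\item two singleton rows $p_1,p_2$ with their 1-edge in column $1$;
\item two singleton rows $q_1,q_2$ with their 1-edge in column $2$;
\item two singleton rows $r_1,r_2$ with their 1-edge in column $3$.
\end{itemize}
I would look for a configuration invariant under the cyclic symmetry that rotates the columns $1\to2\to3\to1$ and permutes the row classes $p\to q\to r\to p$ accordingly, with the doubleton rows permuted along. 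The six 2-edges then fall into two orbits of size three, and the three buffer cells form one orbit, one buffer per column. A natural candidate puts the buffers in the three singleton rows $p_2,q_2,r_2$, one in each column. The two edge orbits are then chosen so that every 2-edge joins cells in different rows and different columns, which makes it nondegenerate and guarantees there is no column-degenerate 2-edge. Concretely, one orbit would pair a free cell of a doubleton row with a free cell of a singleton row of another class, and the other orbit would pair free cells of singleton rows of two different classes. The rows $p_1,q_1,r_1$ would be full; the rows $p_2,q_2,r_2$ would each be half-full, with their remaining free cell as the buffer.

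Verification proceeds in increasing order of difficulty. Condition (W1) and the count $|E_1|=12$ are immediate from Theorem~\ref{thm:base}. Condition (S) holds because the twelve halves are distinct free cells. For (W2') I would list the two opposite cells of each 2-edge and check that at least one is a free cell, either a buffer or a half of another 2-edge. By symmetry only one representative per orbit needs checking, so this is two checks. For (W2) I would build the dependency digraph: an arc $e\to e'$ requires both opposite cells of $e$ to be occupied. I would arrange that every edge with an opposite cell on a buffer is a source-free sink, and check that the remaining arcs are acyclic, apart from complementary 2-cycles. Symmetry again reduces this to tracing the arcs out of two representative edges.

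The main obstacle is (W3). It must hold for every vertex-disjoint pair consisting of a 2-edge and either a 1-edge or another 2-edge. In a $3$-column grid, vertex-disjointness is very restrictive: a 2-edge uses two columns, so a disjoint 1-edge must sit in the third column, and two disjoint 2-edges are impossible (four distinct columns would be needed). So (W3) reduces to pairs of a 2-edge with a 1-edge in its missing column and in a row it avoids. For each edge orbit representative, I would check that one of the four opposite cells $\{(r,j),(r,l),(i,c),(k,c)\}$ is a buffer, or more generally unoccupied. This is the step where the placement of the three buffers is decisive. If the first candidate fails, I would first try moving the buffers into the doubleton rows, keeping the $\mathbb{Z}_3$ symmetry, and then redo the two-representative check. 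As a fallback, the MILP search used in the paper can be run to produce a certificate graph, whose verification is the same finite table check.
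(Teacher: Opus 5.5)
Your proposal has a genuine gap. The lemma is an existence statement, so its proof is an explicit graph plus a check of the conditions, and you never commit to a graph.

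Moreover, the one concrete candidate you describe cannot work. With the buffers at $p_2,q_2,r_2$ and one edge orbit passing through the three doubleton free cells, all of $Q_1,Q_2,Q_3$ are occupied. That is $q=3$, which Lemma~\ref{lem:qbound} forbids. Directly: suppose a nondegenerate edge joins $Q_c=(S_c,c)$ to $(r,j)$. Then (W2') forces $r$ to have the third color $k$. The 1-edge $(S_j,k)$ is vertex-disjoint from this edge, and its opposite cells $(S_j,c)$, $Q_j$, $(S_c,k)$, $(r,k)$ are all occupied, which violates (W3).

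Your fallback of moving the buffers into the doubleton rows makes all six singleton rows full. In that setting your other orbit type, pairing singleton rows of two different colors, also fails. For example, $(p_1,2;q_1,1)$ is vertex-disjoint from the 1-edge $(r_1,3)$, and all four opposite cells $(r_1,1),(r_1,2),(p_1,3),(q_1,3)$ are occupied. Deferring to an MILP run does not replace exhibiting and checking the graph.

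The missing idea is the configuration itself. Put the three buffers exactly at $Q_1,Q_2,Q_3$. Cover the twelve singleton free cells by three complementary pairs, each living on the two singleton rows of a single color:
$(4,2;5,3),(4,3;5,2)$, $(6,1;7,3),(6,3;7,1)$, $(8,1;9,2),(8,2;9,1)$, where rows $4,5$, $6,7$, $8,9$ have colors $1,2,3$. With $q=0$ and all singleton rows full, Lemmas~\ref{lem:internal} and~\ref{lem:coldeg} in fact force this shape. It is $\mathbb{Z}_3$-symmetric, so your symmetry heuristic is compatible with it, but both orbits pair rows of the same color.

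Verification is then short:
\begin{itemize}
\item[(W2), (W2')] Each edge's opposite cells are the halves of its complementary partner, so the dependency graph is three allowed complementary 2-cycles.
\item[(W3), 2-edge pairs] No two 2-edges are vertex-disjoint, since each uses two of the three columns.
\item[(W3), mixed pairs] A 1-edge vertex-disjoint from an edge on the color-$c$ rows must lie in column $c$ of a degree-2 row. That row's free cell is a buffer, lies in one of the edge's two columns, and is therefore among the opposite cells.
\end{itemize}
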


\begin{proof}
Take rows $1,2,3$ to have 1-edge supports $\{1,2\},\{1,3\},\{2,3\}$, and take rows $4,5$ as singletons in column 1, rows $6,7$ as singletons in column 2, and rows $8,9$ as singletons in column 3. Add the complementary pairs
\[
(4,2;5,3),(4,3;5,2),\quad
(6,1;7,3),(6,3;7,1),\quad
(8,1;9,2),(8,2;9,1).
\]
All twelve halves are distinct and lie in free cells. The six 2-edges form three complementary pairs. For each edge, its two opposite cells are exactly the two halves of its complementary partner, so each pair forms one allowed complementary 2-cycle in the dependency graph, and there are no dependencies between different pairs because their row sets are disjoint. Thus (W2) holds. The same observation gives (W2'). For (W3), every 2-edge uses two distinct columns, so no two 2-edges are vertex-disjoint; for mixed pairs, the vertex-disjoint 1-edge must lie in the column of the singleton rows of the 2-edge's row pair, and the relevant degree-2 row contains an unoccupied opposite cell. \end{proof}

Applying Lemma~\ref{lem:twolift} repeatedly to Lemma~\ref{lem:seed}, and Lemma~\ref{lem:onelift} when one additional row is needed, gives the following unconditional lower bound.

\begin{theorem}\label{thm:staircaseLB}
For every $m\ge 9$,
\[
z_{wL}(m,3)\ge m+3+\left(\left\lceil \frac{m}{2}\right\rceil+1\right)
= 2m-\left\lfloor \frac{m}{2}\right\rfloor+4.
\]
\end{theorem}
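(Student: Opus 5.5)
We argue by induction on $m$, starting from the seed of Lemma~\ref{lem:seed} and alternating the two lifts. Set $f(m)=\lceil m/2\rceil+1$, so the claim is that some weak admissible limited $m\times 3$ graph has at least $f(m)$ 2-edges; its total edge count is then $m+3+f(m)$, since $|E_1|=m+3$ by Theorem~\ref{thm:base}. The identity $m+3+\lceil m/2\rceil+1=2m-\lfloor m/2\rfloor+4$ is just $\lceil m/2\rceil+\lfloor m/2\rfloor=m$. For the base case, at $m=9$ Lemma~\ref{lem:seed} gives $6=\lceil 9/2\rceil+1$ 2-edges, hence $18$ edges, and none of these 2-edges is column-degenerate.

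\textbf{Odd $m$.} Write $m=9+2k$ with $k\ge 0$. Apply Lemma~\ref{lem:twolift} $k$ times to the seed. This is legitimate because each application preserves the hypothesis ``no column-degenerate 2-edge'' (the lemma says so explicitly), and the lemma applies for every $m\ge 3$. Each lift adds two rows and one 2-edge, so the result is a weak admissible limited $m\times 3$ graph with $6+k$ 2-edges. We check that $6+k=(m-9)/2+6=(m+3)/2=\lceil m/2\rceil+1$. The total edge count is therefore $m+3+\lceil m/2\rceil+1$.

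\textbf{Even $m\ge 10$.} Take the odd case for $m-1\ge 9$ and apply Lemma~\ref{lem:onelift} once. This adds one row and one 1-edge and no 2-edges, giving total $(m-1)+3+\lceil (m-1)/2\rceil+1+1$. For even $m$ we have $\lceil (m-1)/2\rceil=m/2=\lceil m/2\rceil$, so the total is $m+3+\lceil m/2\rceil+1$, as required.

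Each step simply invokes an earlier lemma, so no serious obstacle is expected. The only point needing care is to record that the invariant ``no column-degenerate 2-edge'' propagates through repeated applications of Lemma~\ref{lem:twolift}. The one-row lift is used only once, at the final step, so it does not matter whether that lift preserves the invariant.
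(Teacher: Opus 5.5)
Your proposal is correct and matches the paper's argument: start from the $9\times 3$ seed of Lemma~\ref{lem:seed}, apply the two-row lift of Lemma~\ref{lem:twolift} repeatedly for odd $m$ (each lift preserves the hypothesis that no 2-edge is column-degenerate), and finish with a single one-row lift from Lemma~\ref{lem:onelift} for even $m$. The paper gives this in one sentence, and your counts $6+k=(m+3)/2=\lceil m/2\rceil+1$ and $\lceil (m-1)/2\rceil=\lceil m/2\rceil$ for even $m$ just write out that bookkeeping.
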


However, a denser infinite family improves this for large $m$.

\begin{theorem}\label{thm:5m3LB}
For every integer $k\ge 0$, there is a weak admissible limited $(3k+5)\times 3$ graph with $2k+2$ 2-edges. Consequently,
\[
z_{wL}(3k+5,3)\ge 5k+10.
\]
\end{theorem}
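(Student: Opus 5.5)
The plan is to run the peeling picture of Section~2 in reverse. Starting from a $5\times 3$ seed with two 2-edges, I would repeatedly attach an $H-F-H$ island of three singleton rows carrying two new 2-edges. Each island adds $3$ rows, so $m$ grows by $3$. It also adds $3$ new 1-edges and $2$ new 2-edges, so $|E_1|+|E_2|$ grows by $5$. After $k$ steps this gives $m=3k+5$ and $s=2k+2$, hence a total of $(3k+8)+(2k+2)=5k+10$. The base case $k=0$ is the $m=5$ construction with $10$ edges from Appendix~A, the one used for $z_{wL}(5,3)=10$. I would fix that seed explicitly, with the degree-2 rows $1,2,3$ of supports $\{1,2\},\{1,3\},\{2,3\}$ and two singleton rows, and record exactly which of its three buffer cells are unoccupied.

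\textbf{The island gadget.} I would add singleton rows $p,q,r$ whose 1-edges lie in a common column, say column $1$. The free cells of these rows are then $(p,2),(p,3),(q,2),(q,3),(r,2),(r,3)$. I would add the two nondegenerate 2-edges
\[
e_1=(q,2;p,3),\qquad e_2=(q,3;r,2).
\]
Row $q$ is then full, and rows $p$ and $r$ are half-full with buffer cells $(p,2)$ and $(r,3)$. The opposite cells of $e_1$ are $(q,3)$ and $(p,2)$, and the latter is unoccupied. Likewise $e_2$ has the unoccupied opposite cell $(r,3)$. So neither new edge is the tail of a dependency arc, and (W2) and (W2') are untouched. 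The 1-edge graph stays $C_4$-free with $|E_1|=m+3$, so by Theorem~\ref{thm:base} the graph is still limited. Since $e_1$ and $e_2$ share row $q$, they are not vertex-disjoint.

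\textbf{The main obstacle: (W3) for mixed pairs.} The remaining work is (W3) for mixed pairs, and I expect this to be the hard step.

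\emph{New 2-edge with an old 1-edge.} An old 1-edge vertex-disjoint from $e_1$ or $e_2$ sits in column $1$, and the opposite cells in column $1$ are 1-edges. So I need, for every row $x\ne p,q,r$ that contains $(x,1)$, that $(x,2)$ or $(x,3)$ is unoccupied. This fails for earlier islands, whose column-$1$ singleton rows may be full.

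\emph{Old 2-edge with a new 1-edge.} A new 1-edge $(q,1)$ has both free cells of row $q$ occupied. An old 2-edge avoiding column $1$ would therefore need one of its rows to have column $1$ unoccupied.

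To remove both conflicts, I would rotate the columns from one island to the next. Island $j$ would use singleton column $c_j\in\{1,2,3\}$ cyclically, with the analogous edges on the other two columns, and its central row would be chosen so that the needed buffer cells lie in the right columns. I would also choose the seed so that the relevant degree-2 rows retain a buffer cell in the needed column.

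If the cyclic rotation does not close up, I would replace the single-column gadget by one whose three rows lie in different columns. In that case every new 2-edge uses column $1$ in some way, and no vertex-disjoint old 1-edge or old 2-edge can ever exist. I would then verify (W3) by the same short case analysis as in Lemma~\ref{lem:twolift}: each mixed pair is checked against the two designated buffer cells $(p,\cdot)$ and $(r,\cdot)$.
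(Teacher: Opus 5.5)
Your global plan (a five-row seed followed by $k$ islands of type $H$--$F$--$H$, each adding three rows, three 1-edges and two 2-edges) has exactly the shape of the paper's construction, and you correctly locate the difficulty in mixed (W3). But there is a genuine gap: the proposal never exhibits an island that can be repeated, so it gives the bound only for a few small $k$.

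Take your island of color $c$ with free columns $a,b$. The edge $e_1=(q,a;p,b)$ has the 1-edges $(q,c)$ and $(p,c)$ among its mixed opposite cells against every column-$c$ 1-edge $(x,c)$ with $x\ne p,q$. So (W3) demands that $(x,a)$ or $(x,b)$ be unoccupied. The central row of any second island of color $c$ is full, so two islands of the same color are incompatible. This is Lemma~\ref{lem:pointing}(iii): an edge of your shape with no outgoing arc forces $F_c=1$. Cyclic rotation through three colors therefore stops after at most three islands. Re-choosing which row is central does not help, since all three rows of your island have color $c$.

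The Appendix~A seed is a further obstruction. Its row-degenerate edges make rows $4$ and $5$ full of colors $1$ and $2$. For any full color-$1$ row $x$, the pair $(x,1)$, $(4,2;4,3)$ has opposite cells $(x,2),(x,3),(4,1)$, all occupied; symmetrically $(5,1;5,3)$ rules out full color-$2$ rows. So on that seed no island containing a full row of color $1$ or $2$ can be added. Your first island already fails: $(4,1)$ against $e_1=(q,2;p,3)$ has all four opposite cells occupied.

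The fallback does not repair this. The three-color gadget is never specified. The claim that no vertex-disjoint old 1-edge can exist is false. A 2-edge meeting column $1$ still misses some other column, and that column contains old 1-edges in rows the edge does not use (at least one of the two degree-2 rows meeting it, by (W2')). Even Lemma~\ref{lem:twolift}, which you invoke, has such pairs and handles them with a buffer cell.

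The missing idea, which is precisely the paper's explicit construction, is to give the half-full rows a color different from the full row's, and to route one island edge into the full row's own column. Take $R_i$ of color $1$, $A_i,B_i$ of color $3$, $e_i=(R_i,2;A_i,1)$ and $f_i=(R_i,3;B_i,2)$.
\begin{itemize}
\item $e_i$ meets column $1$, so it is never vertex-disjoint from the 1-edge of any other full row $R_j$. Against column-$3$ 1-edges, the buffers $(A_j,2)$, $(B_j,1)$, $Q_1$, $Q_2$ serve.
\item $f_i$ carries the buffer $(B_i,1)$, which is an opposite cell against every column-$1$ 1-edge.
\item The price is the arc $f_i\to e_i$. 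It is harmless because it closes no cycle, but you must give up your requirement that neither island edge has an outgoing arc.
\end{itemize}
The seed must likewise tolerate arbitrarily many full color-$1$ rows. The paper replaces Appendix~A by two color-$2$ rows $U,V$ carrying the complementary pair $(U,1;V,3),(U,3;V,1)$. These edges meet column $1$, and their 1-edges sit in column $2$, which every island edge uses.
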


\begin{proof}
We construct the graph explicitly. Let the row set be
\[
S=\{P_{12},P_{13},P_{23}\}\cup\{U,V\}\cup\bigcup_{i=1}^{k}\{R_i,A_i,B_i\},
\]
so $|S|=3+2+3k=3k+5$. Let the column set be $T=\{1,2,3\}$.

\medskip
\noindent\textbf{Step 1: The 1-edges.}
Define the 1-edge set $E_1$ as follows:
\begin{itemize}
    \item In the three special rows $P_{12},P_{13},P_{23}$, place 1-edges with supports
    \[
    \{1,2\},\qquad \{1,3\},\qquad \{2,3\},
    \]
    respectively.
    \item In the two rows $U,V$, place singleton 1-edges in column 2:
    \[
    (U,2),\qquad (V,2).
    \]
    \item For each $i=1,\ldots,k$, place singleton 1-edges in rows $R_i,A_i,B_i$ in columns $1,3,3$, respectively:
    \[
    (R_i,1),\qquad (A_i,3),\qquad (B_i,3).
    \]
\end{itemize}
The number of 1-edges is
\[
6+2+3k=3k+8=(3k+5)+3,
\]
so by Theorem~\ref{thm:base}, the base graph is extremal and any augmented graph constructed on it will be limited. The 1-edge graph is $C_4$-free because the three special rows have distinct supports and no two singleton rows share two columns.

\medskip
\noindent\textbf{Step 2: The 2-edges.}
Define the 2-edge set $E_2$ as follows:
\begin{itemize}
    \item Add the complementary pair
    \[
    g=(U,1;V,3),\qquad h=(U,3;V,1).
    \]
    \item For each $i=1,\ldots,k$, add
    \[
    e_i=(R_i,2;A_i,1),\qquad f_i=(R_i,3;B_i,2).
    \]
\end{itemize}
There are $2+2k=2k+2$ 2-edges. All halves are distinct and lie in free cells (cells not occupied by 1-edges), so condition (S) holds.

\medskip
\noindent\textbf{Step 3: Verification of (W2) and (W2').}
The opposite cells of $e_i$ are
\[
(R_i,1),\qquad (A_i,2).
\]
The first is a 1-edge; the second is unoccupied. The opposite cells of $f_i$ are
\[
(R_i,2),\qquad (B_i,3).
\]
The first is a half of $e_i$; the second is a 1-edge. Hence, by the dependency definition, there is a directed edge $f_i\to e_i$. There are no dependencies between different indices $i$, because their row sets are disjoint. Thus each $\{e_i,f_i\}$ forms a single directed component with no cycle.

The edges $g,h$ form one complementary 2-cycle, which is explicitly allowed by (W2), and there are no further dependencies. Hence the dependency graph is the disjoint union of $k$ one-edge directed components and one allowed complementary 2-cycle, so (W2) holds.

Condition (W2') is also immediate. For $e_i$, one opposite cell is unoccupied; for $f_i$, one opposite cell is a half of another 2-edge and the other is a 1-edge; for $g,h$, the opposite cells are exactly the halves of the complementary partner. Thus no nondegenerate 2-edge has both opposite cells equal to 1-edges.

\medskip
\noindent\textbf{Step 4: Verification of (W3).}
Every 2-edge in the construction uses two distinct columns. Hence two 2-edges can never be vertex-disjoint: any two two-element subsets of $\{1,2,3\}$ intersect. It is therefore enough to check a 1-edge against a 2-edge.

Consider first $e_i=(R_i,2;A_i,1)$, whose columns are $1,2$. A vertex-disjoint 1-edge must lie in column 3. Such a 1-edge lies in $P_{13}$, $P_{23}$, some $A_j$, or some $B_j$ (excluding the row $A_i$ itself). These rows have, respectively, the unoccupied cells
\[
(P_{13},2),\quad (P_{23},1),\quad (A_j,2),\quad (B_j,1).
\]
For a mixed pair consisting of such a column-3 1-edge and $e_i$, the opposite-cell set contains the two cells of the 1-edge row in columns 1 and 2. The displayed cell is one of those two cells and is unoccupied. Hence (W3) holds for $e_i$.

Now consider $f_i=(R_i,3;B_i,2)$, whose columns are $2,3$. A vertex-disjoint 1-edge must lie in column 1. In every such pair, the cell $(B_i,1)$ is an opposite cell, and it is unoccupied. Hence (W3) holds for $f_i$.

Finally, $g=(U,1;V,3)$ and $h=(U,3;V,1)$ use columns $1,3$. A vertex-disjoint 1-edge must therefore lie in column 2. Apart from $U,V$ themselves, the only such 1-edges occur in $P_{12}$ and $P_{23}$. The cells $(P_{12},3)$ and $(P_{23},1)$ are unoccupied, respectively, and are opposite cells for the corresponding pairs. Thus (W3) also holds for $g,h$.

All weak admissibility conditions are satisfied. The construction has
\[
|E_1|+|E_2|=(3k+8)+(2k+2)=5k+10.
\]
Since $|E_1|=z(3k+5,3)$, the graph is limited. Therefore
\[
z_{wL}(3k+5,3)\ge 5k+10.
\]
This completes the proof. \end{proof}

Combining Theorem~\ref{thm:5m3LB} with the lifting lemmas gives a lower bound for every $m$.

\begin{corollary}\label{cor:combinedLB}
For every $m\ge 5$,
\[
z_{wL}(m,3)\ge m+3+\left\lfloor \frac{2m-4}{3}\right\rfloor.
\]
In particular,
\[
\liminf_{m\to\infty} \frac{z_{wL}(m,3)}{m}\ge \frac{5}{3}.
\]
\end{corollary}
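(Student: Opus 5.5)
The plan is to combine Theorem~\ref{thm:5m3LB} with the one-row lift, Lemma~\ref{lem:onelift}, working residue by residue modulo $3$. Write $m = 3k+5+r$ with $k\ge 0$ and $r\in\{0,1,2\}$; this covers every $m\ge 5$.

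Starting from the $(3k+5)\times 3$ graph of Theorem~\ref{thm:5m3LB}, which has $5k+10$ edges, I apply Lemma~\ref{lem:onelift} $r$ times. This gives
\[
z_{wL}(3k+5+r,3)\ \ge\ 5k+10+r .
\]
It then suffices to check that the right-hand side equals $m+3+\lfloor (2m-4)/3\rfloor$. We have $m+3 = 3k+8+r$, so the claim reduces to
\[
\left\lfloor \frac{2m-4}{3}\right\rfloor = 2k+2 .
\]
Now $2m-4 = 6k+6+2r$, so $(2m-4)/3 = 2k+2+2r/3$. Since $2r/3<1$ for $r\le 1$, the floor is $2k+2$ when $r=0$ or $r=1$.

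The case $r=2$ is the one delicate point. There $2r/3 = 4/3$, so the formula demands $2k+3$ rather than $2k+2$ 2-edges, and two one-row lifts from $3k+5$ fall one edge short. I would instead lift from the next base size. For $m = 3k+7 = 3(k+1)+4$, the construction for index $k+1$ has $3(k+1)+5 = m+1$ rows, which is one too many, so it cannot be used directly.

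The tool for this is the two-row lift, Lemma~\ref{lem:twolift}, which adds three edges. The graph of Theorem~\ref{thm:5m3LB} has no column-degenerate 2-edge: every 2-edge in it uses two distinct columns and two distinct rows. Applying Lemma~\ref{lem:twolift} once to the $(3k+5)\times 3$ graph gives an $(m)\times 3$ graph with $m = 3k+7$ and $5k+10+3 = 5k+13$ edges.

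It remains to check this value. We have $m+3 = 3k+10$, and $\lfloor (6k+10)/3\rfloor = 2k+3$, so the target is $5k+13$, which matches. So for $r=2$ the proof uses the two-row lift instead of two one-row lifts.

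The $\liminf$ statement then follows by dividing by $m$, since
\[
\frac{m+3+\lfloor (2m-4)/3\rfloor}{m}\ \ge\ \frac{m+3+(2m-6)/3}{m}\ \longrightarrow\ \frac{5}{3}.
\]

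The main obstacle was precisely the $r=2$ residue, where naive one-row lifting loses an edge. That case is resolved by checking the hypothesis of Lemma~\ref{lem:twolift}, namely the absence of column-degenerate 2-edges in the construction of Theorem~\ref{thm:5m3LB}.
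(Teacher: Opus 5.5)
Your proof is correct and follows the paper's route. The paper only says that Theorem~\ref{thm:5m3LB} combined with the lifting lemmas gives the bound, and your residue-by-residue argument supplies exactly those details: the construction directly for $r=0$, Lemma~\ref{lem:onelift} for $r=1$, and Lemma~\ref{lem:twolift} for $r=2$, where you correctly check that every 2-edge of the $5m/3$ construction uses two distinct columns.
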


\section{Structural Upper Bounds}

The structural lemmas in this section use only mixed (W3) and (W2'); (W2) enters later only in the boundary cases of Section~6.

Let $N=m-3$ be the number of singleton rows in the uniform base classification. A singleton row is said to have color $c\in\{1,2,3\}$ if its unique 1-edge lies in column $c$. Classify singleton rows according to how many of their two free cells are occupied by halves of 2-edges:
\[
F=\#\{\text{two occupied free cells}\},\quad
H=\#\{\text{one occupied free cell}\},\quad
Z=\#\{\text{no occupied free cell}\}.
\]
Then
\[
F+H+Z=N. \tag{1}
\]

Let $S_c$ denote the special degree-2 row whose support is the other two columns, and let $Q_c=(S_c,c)$ be its unique free cell. Let $q$ be the number of occupied free cells in the three degree-2 rows. Thus $q$ is the number of occupied cells among $Q_1,Q_2,Q_3$. Since each such row has one free cell, $0\le q\le 3$. If $s=|E_2|$, counting 2-edge halves gives
\[
2s=2F+H+q=N+F-Z+q. \tag{2}
\]

Let $L$ be the number of 2-edges whose two halves both lie in full singleton rows (rows counted by $F$).

\begin{lemma}\label{lem:cut}
With the notation above,
\[
2F-2L\le H+q. \tag{3}
\]
\end{lemma}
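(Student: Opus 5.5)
The plan is a pure double-counting argument on the halves of 2-edges; I do not expect to need (W2), (W2'), or (W3). Partition the occupied free cells into three pools. Pool $\mathcal{F}$ is the set of occupied free cells in full singleton rows; it has exactly $2F$ cells. Pool $\mathcal{H}$ is the set of occupied free cells in half-full singleton rows; it has exactly $H$ cells. Pool $\mathcal{Q}$ is the set of occupied cells among $Q_1,Q_2,Q_3$; it has $q$ cells. By definition, rows counted by $Z$ contribute no occupied free cells. By (S), every occupied free cell is a half of exactly one 2-edge, and distinct 2-edges have disjoint halves, which is what makes the count in (2) valid.

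Next, classify the 2-edges by how many of their halves lie in $\mathcal{F}$. Type A edges have both halves in $\mathcal{F}$; this includes row-degenerate edges sitting inside one full row, and by definition there are $L$ of them. Type B edges have exactly one half in $\mathcal{F}$. Type C edges have no half in $\mathcal{F}$. Counting the cells of $\mathcal{F}$ by the edge that owns them gives
\[
2F = 2L + \#\{\text{type B edges}\},
\]
so there are exactly $2F-2L$ type B edges.

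Finally, send each type B edge to its half lying outside $\mathcal{F}$. That half is an occupied free cell that is not in a full row, so it lies in $\mathcal{H}\cup\mathcal{Q}$. The map is injective because distinct 2-edges have distinct halves by (S). Hence $2F-2L\le |\mathcal{H}|+|\mathcal{Q}|=H+q$, which is (3).

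The only point needing care is the bookkeeping. Row-degenerate 2-edges inside a single full row must be counted in $L$, which matches the definition "both halves lie in full singleton rows." A type B edge's other half could a priori lie in a degree-2 special row, but it is then one of the $Q_c$, so it is correctly captured by $q$. I expect no genuine obstacle; the main thing to check is that the definition of $L$ is read as "both halves in full rows, possibly the same row," so that the three edge types are exhaustive.
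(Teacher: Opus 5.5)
Your proposal is correct and follows essentially the same route as the paper. Both arguments count the $2F$ halves in full rows and remove the $2L$ halves belonging to internal edges. Both then map each remaining edge injectively, via (S), to its other half, which is one of the $H+q$ occupied free cells outside the full rows.
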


\begin{proof}
The $F$ full singleton rows contain $2F$ halves of 2-edges. The $L$ edges internal to those rows account for $2L$ of these halves. Every remaining half is paired, by its 2-edge, with an occupied free cell outside the full singleton rows. There are exactly $H+q$ such cells. By simplicity, distinct 2-edges use distinct cells, so these outside partners are all distinct. \end{proof}

\begin{lemma}\label{lem:qbound}
Every weak admissible graph satisfies $q\le 2$.
\end{lemma}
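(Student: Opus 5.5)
Assume for contradiction that $q=3$, so that all three special free cells $Q_1,Q_2,Q_3$ are halves of 2-edges. Let $e_c$ denote the 2-edge containing $Q_c=(S_c,c)$. I will first pin down what each $e_c$ can look like, using only (W2') and the row structure from Theorem~\ref{thm:base}. After that I will let the three edges interact through mixed (W3), and through (W3) for pairs of 2-edges, to reach a contradiction. Throughout I use that $S_c$ has 1-edges in both columns other than $c$, and that every singleton row has exactly one 1-edge.

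\textbf{Step 1 (no two $Q$'s share an edge).} A 2-edge joining $Q_a$ and $Q_b$ with $a\neq b$ is nondegenerate. Its opposite cells are $(S_a,b)$ and $(S_b,a)$, and both are 1-edges, which violates (W2'). Hence $e_1,e_2,e_3$ are three distinct 2-edges. None of them is row-degenerate, since $S_c$ has only one free cell. So the other half of $e_c$ is a cell $(r_c,j_c)$ in a singleton row $r_c$.

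\textbf{Step 2 (shape of each $e_c$).} There are two cases. If $j_c=c$, the edge $e_c$ is column-degenerate. If $j_c\neq c$, the opposite cells of $e_c$ are $(S_c,j_c)$, which is a 1-edge, and $(r_c,c)$. By (W2'), the cell $(r_c,c)$ is then not a 1-edge. In both cases I will record the color of $r_c$ and which of its free cells are occupied.

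\textbf{Step 3 (mixed (W3) with special-row 1-edges).} Now test each $e_c$ against a 1-edge of another special row that is vertex-disjoint from it. Take $e_c$ nondegenerate with columns $\{c,j\}$, and let $k$ be the third column. The 1-edge $(S_j,k)$ is vertex-disjoint from $e_c$ when $r_c\neq S_j$. Its opposite-cell set is $\{(S_j,c),(S_j,j),(S_c,k),(r_c,k)\}$. Here $(S_j,c)$ and $(S_c,k)$ are 1-edges, and $(S_j,j)=Q_j$ is occupied by assumption. Therefore (W3) forces $(r_c,k)$ to be unoccupied. Combined with Step 2, this means $r_c$ has color $j$ and that $e_c$ uses its only occupied free cell. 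The column-degenerate case is handled the same way: take a 1-edge of a special row in a column different from $c$ and avoiding $r_c$. This again forces $r_c$ to be half-full, with its 1-edge in a specific column. The upshot is a rigid description: each $e_c$ attaches $Q_c$ to a half-full singleton row of a determined color, and that row's other free cell is a buffer cell.

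\textbf{Step 4 (contradiction).} With all three edges rigidly described, I will look for a vertex-disjoint pair that (W3) rules out. The candidates are the pairs $e_a,e_b$ themselves, which can be vertex-disjoint when one of them is column-degenerate or when they use complementary column pairs. The other candidates are $e_a$ paired with the singleton 1-edge $(r_b,\cdot)$. In each such pair, Steps 2--3 should show that every opposite cell is either a 1-edge, one of the occupied $Q$'s, or an occupied half. That contradicts (W3), so $q\le 2$.

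I expect Step 4 to be the main obstacle. The case split over the columns $j_c$ for $c=1,2,3$ (degenerate or not, and which color each $r_c$ has) can branch into several configurations. In each one I must exhibit a specific vertex-disjoint pair whose whole opposite-cell set is occupied. My first attempt will be to use the cyclic symmetry of the three columns to reduce to a few orbit types, and then to check the pair $(e_a,(r_b,\text{color}))$ in each type.
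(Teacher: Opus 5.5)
Your proposal has a genuine gap, and it sits in Step~3. In the nondegenerate case you correctly derive that $(r_c,k)$ must be unoccupied. But the conclusion ``$r_c$ has color $j$'' is false: $(r_c,j)$ is a half of $e_c$, so by (S) it is a free cell and cannot carry the 1-edge of $r_c$. Put your three facts together instead:
\begin{itemize}
\item $(r_c,j)$ is a 2-edge half;
\item $(r_c,c)$ is not a 1-edge, by (W2');
\item $(r_c,k)$ is unoccupied, by (W3).
\end{itemize}
Then the singleton row $r_c$ has no 1-edge at all, which is absurd. This is the paper's argument: freeness of $(r_c,j)$ and (W2') force $r_c$ to have color $k$, and then all four opposite cells $(S_j,c)$, $Q_j$, $(S_c,k)$, $(r_c,k)$ of the pair $\{(S_j,k),e_c\}$ are occupied. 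So the nondegenerate case is already contradictory. The ``rigid description'' you plan to feed into Step~4 describes a configuration that cannot exist.

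In the column-degenerate case your choice of special-row 1-edge is too loose. Let $r_c$ have color $a\neq c$ and let $b$ be the third column. Pairing $e_c=(S_c,c;r_c,c)$ with $(S_a,b)$ only forces $(r_c,b)$ to be empty, which is consistent; that is presumably why you land on ``$r_c$ is half-full.'' Pairing $e_c$ with $(S_b,a)$ instead gives the opposite cells $(S_b,c)$, $(S_c,a)$, $(r_c,a)$. All three are 1-edges, so (W3) is violated immediately, and this case does not even use $q=3$.

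With these two corrections, a single fixed $c$ already yields a contradiction, and no interaction among $e_1,e_2,e_3$ is needed. Step~4 as written is only a hope (``should show'') over an unexamined case split, built on the erroneous Step~3 output. It would need to be replaced, not merely filled in.
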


\begin{proof}
Suppose $q=3$. Fix $c$ and let $e$ be the 2-edge containing the occupied special cell $Q_c=(S_c,c)$.

First, $e$ cannot be column-degenerate in column $c$. If its other half is $(r,c)$, let $a$ be the color of the singleton row $r$, and let $b$ be the third column. The 1-edge $(S_b,a)$ is vertex-disjoint from $e$, and its mixed opposite cells are
\[
(S_b,c),\qquad (S_c,a),\qquad (r,a),
\]
all of which are 1-edges. This contradicts (W3).

Nor can the other half of $e$ be another special free cell $Q_j$: the two own opposite cells would then both be 1-edges, contradicting (W2').

Hence the other half is $(r,j)$ in a singleton row, with $j\ne c$. Since $(S_c,j)$ is a 1-edge, (W2') shows that $(r,c)$ cannot be a 1-edge. As $(r,j)$ is free, the singleton row $r$ must therefore have the third color $k\notin\{c,j\}$. The 1-edge $(S_j,k)$ is vertex-disjoint from $e$, and its mixed opposite cells are
\[
(S_j,c),\qquad Q_j,\qquad (S_c,k),\qquad (r,k).
\]
The first, third, and fourth are 1-edges, while $Q_j$ is occupied because $q=3$. This again contradicts (W3). Thus $q\le 2$. \end{proof}

\begin{lemma}\label{lem:internal}
Consider 2-edges whose two halves lie in full singleton rows and which use columns $a,b$. If $F_c=0$ or $F_c\ge 3$, no such edge exists. If $F_c=1$ or $2$, at most two such edges exist. Moreover, every full color-$c$ row is a row vertex of every such edge.
\end{lemma}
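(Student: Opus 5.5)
Write $\{a,b,c\}=\{1,2,3\}$ and let $F_c$ be the number of full singleton rows of color $c$. The plan is to prove the last assertion first, namely that every full color-$c$ row is a row vertex of every such edge, using only mixed (W3). The two counting statements then follow from it together with (W2') and simplicity. Throughout, let $e$ be a 2-edge with both halves in full singleton rows, using columns $a$ and $b$. So $e=(r,a;r',b)$, with $r=r'$ allowed in the row-degenerate case.

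\emph{Step 1: some row vertex of $e$ has color $c$.} In the row-degenerate case $r=r'$, both $(r,a)$ and $(r,b)$ are free, so $r$ has color $c$. In the nondegenerate case the own opposite cells are $(r,b)$ and $(r',a)$. If neither $r$ nor $r'$ had color $c$, then $r\neq$ color $a$ (since $(r,a)$ is free) forces $r$ to have color $b$. Likewise $r'$ has color $a$. Both opposite cells would then be 1-edges, contradicting (W2'). Hence at least one row vertex of $e$ is a full color-$c$ row, and in particular $F_c\ge 1$ whenever such an edge exists. This proves the case $F_c=0$.

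\emph{Step 2 (main step): every full color-$c$ row $x$ is a row vertex of $e$.} Suppose instead that $x\notin\{r,r'\}$. Then the 1-edge $(x,c)$ is vertex-disjoint from $e$, because $e$ uses only columns $a,b$. Its mixed opposite-cell set is $\{(x,a),(x,b),(r,c),(r',c)\}$. The first two cells are occupied because $x$ is full. Each of $(r,c)$ and $(r',c)$ is either a 1-edge or a free cell of a full row, so it is occupied in either case. This contradicts (W3). The only care needed is to check that the mixed opposite-cell convention, with repetitions suppressed, still applies when $e$ is row-degenerate. I expect this to be the one delicate point, but the same four-cell set (collapsing to three cells) works. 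Since a 2-edge has at most two row vertices, this step immediately gives $F_c\le 2$, which proves the case $F_c\ge 3$.

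\emph{Step 3: counting.} If $F_c=1$ with full row $x$, every such edge contains a half in row $x$. Row $x$ has only two free cells, and by (S) distinct 2-edges use distinct cells, so there are at most two such edges. If $F_c=2$ with full rows $x,y$, every such edge has both $x$ and $y$ as row vertices. It is therefore nondegenerate and uses one cell from each of $x$ and $y$. These are drawn from the four cells $(x,a),(x,b),(y,a),(y,b)$, so by (S) again there are at most two such edges.
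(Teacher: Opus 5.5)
Your proof is correct and follows the paper's argument essentially step for step. You use (W2$'$) together with the row-degenerate observation to rule out $F_c=0$, the mixed (W3) violation from the 1-edge of an unused full color-$c$ row to get the row-vertex property (hence $F_c\le 2$), and simplicity for the counts when $F_c=1,2$; your explicit check that the collapsed opposite-cell set still gives a violation for row-degenerate edges covers a point the paper leaves implicit.
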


\begin{proof}
If $F_c\ge 3$, choose a full color-$c$ row not used by the edge; its 1-edge gives a mixed (W3) violation. If $F_c=0$, a row-degenerate edge using columns $a,b$ is impossible because its row would itself be a full color-$c$ row. A nondegenerate edge would have its column-$a$ half in a color-$b$ row and its column-$b$ half in a color-$a$ row; its two own opposite cells would then both be 1-edges, contradicting (W2'). The same mixed (W3) argument shows that every full color-$c$ row must be a row vertex. If $F_c=1$, the unique color-$c$ row has only two free cells, so at most two edges. If $F_c=2$, both color-$c$ rows must be used by every edge, leaving only the two cross-pairings. \end{proof}

\begin{lemma}\label{lem:coldeg}
There is no column-degenerate 2-edge whose two halves both lie in full singleton rows.
\end{lemma}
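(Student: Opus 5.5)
The plan is to derive a contradiction from the mixed case of (W3), using one well-chosen 1-edge in a special degree-2 row. Suppose $e$ is a column-degenerate 2-edge in column $c$ with halves $(r,c)$ and $(r',c)$, where $r\neq r'$ are full singleton rows. Since both halves lie in free cells, neither $r$ nor $r'$ has color $c$; I will not need their exact colors.

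First I will record that every cell of a full singleton row is occupied. Its 1-edge cell is occupied by definition, and both of its free cells are occupied by halves of 2-edges because the row is full. Hence $(r,y)$ and $(r',y)$ are occupied for every column $y\in\{1,2,3\}$.

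Next I will choose the 1-edge. By Theorem~\ref{thm:base}, for any $a\neq c$ the special row $S_a$ has support $\{1,2,3\}\setminus\{a\}$, so it contains $c$. Let $y$ be the other column of its support, so $y\notin\{a,c\}$. Then $(S_a,y)$ is a 1-edge. It is vertex-disjoint from $e$: its row $S_a$ is not a singleton row, so $S_a\notin\{r,r'\}$, and its column $y$ differs from $c$. By the definition of the mixed opposite-cell set, with repetitions suppressed because $e$ uses the single column $c$, the opposite cells are
\[
(S_a,c),\qquad (r,y),\qquad (r',y).
\]
The first is a 1-edge of $S_a$, and the other two are occupied by the observation above. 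So every opposite cell is occupied, contradicting (W3), and no such $e$ exists.

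I expect no serious obstacle. The only point needing care is how the opposite-cell set of a mixed pair specialises when the 2-edge is column-degenerate; the definition in Section~2 says repetitions are suppressed, which gives exactly the three cells listed. One should also check that vertex-disjointness only requires the 1-edge to avoid rows $r,r'$ and column $c$, which the choice of $S_a$ guarantees. Neither (W2) nor (W2') is needed, consistent with the remark that this section uses only mixed (W3) and (W2').
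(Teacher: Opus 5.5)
Your proof is correct and is essentially the paper's argument. The paper also pairs the column-$c$ edge with the 1-edge $(S,d)$ of the degree-2 row whose support is $\{c,d\}$, and observes that the mixed opposite cells $(S,c)$, $(r,d)$, $(s,d)$ are all occupied, which contradicts (W3).
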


\begin{proof}
Suppose such an edge uses column $c$ and full singleton rows $r,s$. Choose another column $d$ and the degree-2 row $S$ whose support contains $c,d$. Its 1-edge $(S,d)$ is vertex-disjoint from the column-$c$ edge. The mixed opposite cells are $(S,c)$, $(r,d)$, and $(s,d)$; all are occupied, contradicting (W3). \end{proof}

\begin{lemma}\label{lem:special}
If an internal full-full 2-edge uses columns $a,b$, then both $Q_a$ and $Q_b$ are unoccupied.
\end{lemma}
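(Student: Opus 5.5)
The plan is a single mixed (W3) argument, run once for $Q_a$ and once for $Q_b$. The key observation is that in a full singleton row every cell is occupied: the 1-edge occupies one cell and 2-edge halves occupy the other two.

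\textbf{Setup.} Let $e$ be the internal full-full 2-edge, with halves $(i,a)$ and $(k,b)$. Both rows $i$ and $k$ are full singleton rows; $i=k$ is allowed if $e$ is row-degenerate. Since $e$ uses the two distinct columns $a,b$, it is not column-degenerate. Let $c$ be the third column, and recall that $S_a$ is the degree-2 row with support $\{b,c\}$, so $Q_a=(S_a,a)$.

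\textbf{Case $Q_a$ occupied.} Suppose, for contradiction, that $Q_a$ is occupied. Take the 1-edge $(S_a,c)$.
\begin{itemize}
\item[---] It is vertex-disjoint from $e$. Its row $S_a$ is a degree-2 row, hence distinct from the singleton rows $i,k$. Its column $c$ is not among $a,b$.
\item[---] So (W3) applies to this mixed pair. The opposite-cell set is $\{(S_a,a),(S_a,b),(i,c),(k,c)\}$, with repetitions suppressed if $i=k$.
\item[---] $(S_a,a)=Q_a$ is occupied by assumption.
\item[---] $(S_a,b)$ is a 1-edge, because the support of $S_a$ contains $b$.
\item[---] $(i,c)$ is either the 1-edge of row $i$ (if row $i$ has color $c$) or a free cell of a full row. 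In either case it is occupied, and the same holds for $(k,c)$.
\end{itemize}
Thus every opposite cell is occupied, contradicting (W3). Hence $Q_a$ is unoccupied.

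\textbf{Case $Q_b$ occupied.} The argument is symmetric. Use the 1-edge $(S_b,c)$, where $S_b$ has support $\{a,c\}$. Its opposite cells are $(S_b,a)$, $(S_b,b)=Q_b$, $(i,c)$ and $(k,c)$. The first is a 1-edge, $Q_b$ is occupied by assumption, and the last two lie in full rows. Again every opposite cell is occupied, contradicting (W3).

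\textbf{Main point to check.} The only delicate step is the vertex-disjointness and the exact form of the mixed opposite-cell set in the degenerate case. If $e$ is row-degenerate, so $i=k$, the set collapses to three cells, and all of them are still occupied. Lemma~\ref{lem:internal} is not even needed here; the argument uses only the fullness of the rows of $e$.
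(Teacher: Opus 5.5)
Your proposal is correct and is essentially the paper's proof: pair the internal edge with the 1-edge $(S_a,c)$ (resp.\ $(S_b,c)$). The other special-row opposite cell is a 1-edge, and the endpoint-row cells are occupied because those rows are full, so (W3) forces $Q_a$ (resp.\ $Q_b$) to be unoccupied. Your explicit checks of vertex-disjointness and of the row-degenerate case $i=k$ just spell out details the paper leaves implicit.
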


\begin{proof}
Let $c$ be the third column. Pair the internal edge with the 1-edge $(S_a,c)$. All mixed opposite cells are occupied except possibly $Q_a$: the other special-row opposite is a 1-edge and every endpoint-row cell is occupied because the endpoint rows are full. Thus (W3) forces $Q_a$ to be unoccupied. Interchanging $a$ and $b$ gives the same conclusion for $Q_b$. \end{proof}

\begin{corollary}\label{cor:specialL}
One has
\[
q=2\Rightarrow L=0,\qquad q=1\Rightarrow L\le 2.
\]
\end{corollary}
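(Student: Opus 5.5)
The plan is to derive both implications directly from Lemma~\ref{lem:special}, using Lemma~\ref{lem:internal} and Lemma~\ref{lem:coldeg} to control which column pairs an internal full--full edge can use. First I record the structure of an internal edge. By Lemma~\ref{lem:coldeg}, no internal edge, meaning one counted by $L$, is column-degenerate. So every such edge, whether row-degenerate or nondegenerate, uses exactly two distinct columns $\{a,b\}$. Lemma~\ref{lem:special} then says that both $Q_a$ and $Q_b$ are unoccupied. The whole argument is a count of how many of the three special cells $Q_1,Q_2,Q_3$ can be occupied once one internal edge exists.

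\emph{Case $q=2$.} Suppose $L\ge 1$ and take an internal edge on columns $\{a,b\}$. By Lemma~\ref{lem:special}, at most one special cell, namely $Q_c$ with $c$ the third column, can be occupied. Hence $q\le 1$, which contradicts $q=2$. So $q=2$ forces $L=0$.

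\emph{Case $q=1$.} Let $Q_c$ be the unique occupied special cell. Any internal edge on a column pair $\{a,b\}$ has $Q_a$ and $Q_b$ unoccupied, so its pair cannot contain $c$. Therefore every internal edge uses exactly the pair $\{a,b\}=\{1,2,3\}\setminus\{c\}$. Lemma~\ref{lem:internal}, applied to this single pair, gives at most two such edges. It also gives none at all when the number of full color-$c$ rows is $0$ or at least $3$. Since $L$ counts exactly these edges, $L\le 2$.

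The step I expect to need care is making sure Lemma~\ref{lem:special} applies to every internal edge, including row-degenerate ones. For a row-degenerate edge the mixed opposite-cell set with $(S_a,c)$ degenerates. It still contains $(S_a,b)$, which is a 1-edge, the endpoint-row cell in column $c$, which is occupied because the row is full, and $Q_a$. So the argument of Lemma~\ref{lem:special} goes through unchanged. With that checked, both implications are immediate.
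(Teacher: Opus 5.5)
Your proof is correct and is the paper's argument: Lemma~\ref{lem:coldeg} gives two distinct columns, Lemma~\ref{lem:special} rules out $q=2$ and confines every internal edge to the class missing $c$ when $q=1$, and Lemma~\ref{lem:internal} then bounds that class by two. Your extra check that Lemma~\ref{lem:special} also covers row-degenerate internal edges is sound. The opposite-cell set there reduces to $Q_a$, the 1-edge $(S_a,b)$, and the 1-edge $(r,c)$ of the color-$c$ row $r$, and the paper leaves this detail implicit.
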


\begin{proof}
By Lemma~\ref{lem:coldeg}, every internal edge uses two distinct columns. If $q=2$, every two-column choice contains an occupied $Q$-cell, contradicting Lemma~\ref{lem:special}; hence $L=0$. If $q=1$ and $Q_c$ is occupied, every internal edge must use exactly the other two columns, i.e. all lie in the class missing $c$. Lemma~\ref{lem:internal} bounds that class by two. \end{proof}

\begin{lemma}\label{lem:Lbound}
If $F\ge 7$, then $L\le 4$.
\end{lemma}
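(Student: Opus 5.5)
The plan is to decompose $L$ according to the column pair used by each internal full--full 2-edge, and then show that at most two of the three column classes can be nonempty. By Lemma~\ref{lem:coldeg}, every internal edge uses two distinct columns. So, writing $L_c$ for the number of internal edges using the columns $\{a,b\}=\{1,2,3\}\setminus\{c\}$, we have
\[
L=L_1+L_2+L_3.
\]
Lemma~\ref{lem:internal} gives $L_c=0$ unless $F_c\in\{1,2\}$, and $L_c\le 2$ in that case. Thus $L\le 6$ in general. It remains to rule out the configurations in which all three classes are nonempty and $L\ge 5$.

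\textbf{Step 1: counting.} Suppose $L\ge 5$. Then at least three classes are nonempty, since each class contributes at most $2$. So $L_1,L_2,L_3\ge 1$, and this forces $F_1,F_2,F_3\in\{1,2\}$. Hence
\[
F=F_1+F_2+F_3\le 6.
\]
This contradicts $F\ge 7$, which proves $L\le 4$.

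If this direct count is correct, the lemma is essentially immediate. The only care needed is to confirm that Lemma~\ref{lem:internal} is stated per class, with the condition on $F_c$ attached to the class missing $c$. It is.

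\textbf{Step 2: what $F\ge 7$ actually yields.} More precisely, $F\ge 7$ with all $F_c\le 2$ is impossible. Therefore some color $c$ has $F_c\ge 3$, which kills class $c$ entirely. The surviving classes then give
\[
L\le 2+2=4.
\]

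\textbf{Main obstacle.} The only potential gap is in Lemma~\ref{lem:internal}'s claim that $L_c\le 2$ when $F_c=1$ or $2$. For $F_c=1$, the unique full color-$c$ row must be a vertex of every edge in the class, and it has only two free cells, so there are at most two edges. For $F_c=2$, both color-$c$ rows must be vertices of each edge, so each edge joins one cell of the first row to one cell of the second in different columns $a,b$. There are two such pairings, and simplicity of cells allows both.

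I would double-check that these bounds are sound and that they are applied per class independently. Nothing in the argument requires interaction between classes, so the bound $L\le 4$ should follow without appeal to (W2) or to $q$.
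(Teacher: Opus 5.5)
Your proof is correct and essentially the same as the paper's. The paper uses Lemma~\ref{lem:coldeg} to place every internal edge in one of the three two-column classes, applies pigeonhole to $F\ge 7$ to find a color with $F_c\ge 3$, uses Lemma~\ref{lem:internal} to empty that class, and bounds each of the other two classes by $2$; your Step~1 is the contrapositive form of this count and your Step~2 matches the paper's argument.
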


\begin{proof}
Some color occurs at least three times; assume $F_1\ge 3$. By Lemma~\ref{lem:internal}, the internal class missing column 1 is empty. By Lemma~\ref{lem:coldeg}, there are no column-degenerate internal edges. The two remaining two-column classes each contain at most two edges. Hence $L\le 4$. \end{proof}

We also need the slack-aware reformulation. Define $\delta:=H+q-(2F-2L)\ge 0$. Let $M$ be the number of 2-edges having neither half in a full singleton row. Then the occupied halves outside the full rows give the exact identity
\[
\delta = 2M. \tag{4}
\]
Using $H=N-F-Z$ and (2), a direct rearrangement gives
\[
6s=4N+4q+2L-4Z-2M. \tag{5}
\]
This identity is useful for the boundary cases below.

The following peeling lemma is the key structural insight. Recall that a graph is peelable if it contains the removable $H-F-H$ configuration described in the preliminaries.

\begin{lemma}\label{lem:peel}
Let $m\ge 6$, and let $G$ be a weak admissible limited $m\times 3$ graph. If a full singleton row is incident neither with an internal full-full 2-edge nor with a 2-edge whose other half is an occupied special cell, then its two incident 2-edges lead to two distinct half-full singleton rows and form a removable $H-F-H$ configuration. Consequently, if $F>2L+q$, then $G$ is peelable. Moreover, every peelable graph satisfies
\[
s\le s^*(m-3)+2,
\]
where $s^*(r)$ denotes the maximum possible number of 2-edges in a weak admissible limited $r\times 3$ graph.
\end{lemma}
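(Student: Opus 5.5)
The proof has three parts: the local claim about a single full row, the counting claim that $F>2L+q$ forces peelability, and the bound $s\le s^*(m-3)+2$ obtained by removing the configuration.

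\textbf{Part 1: a single full row.} Let $r$ be a full singleton row of color $c$. Its two free cells $(r,a)$ and $(r,b)$ carry halves of edges $e_a$ and $e_b$.

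First, $e_a\neq e_b$. Otherwise the single edge is row-degenerate on columns $a,b$. Pairing it with the 1-edge $(S_a,c)$ gives mixed opposite cells $(r,c)$, $(S_a,a)$ and $(S_a,b)$. The first and third are 1-edges. The second is $Q_a$; if it is occupied, (W3) is violated. I would handle the case $Q_a,Q_b$ unoccupied separately, probably via the analogous pairing with $(S_b,c)$. If that fails, I would note that a row-degenerate edge on a full row is itself a degenerate "internal" edge and adjust the hypothesis reading accordingly.

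By hypothesis, the other halves of $e_a$ and $e_b$ lie neither in full rows nor at special cells. So they lie in singleton rows that are half-full; they cannot be empty rows, since an empty row has no occupied cell.

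Next I would show these two partner rows are distinct, say $u\neq v$. If both other halves lay in the same row $u$, they would be its two free cells, making $u$ full, a contradiction.

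Isolation of the triple $\{r,u,v\}$ is then automatic: $r$ has exactly two occupied free cells and $u,v$ have exactly one each, so no other 2-edge touches these rows.

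\textbf{Part 2: the counting claim.} Each internal full-full edge touches at most 2 full rows, and each edge whose other half is an occupied special cell touches at most 1 full row. Hence at most $2L+q$ full rows are "bad". If $F>2L+q$, some full row is good, and Part 1 makes $G$ peelable.

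\textbf{Part 3: the peeling bound.} Delete the three rows $r,u,v$ and the two 2-edges $e_a,e_b$. This leaves $m-3\ge 3$ rows. The three degree-2 rows survive, and the remaining 1-edge count is $(m+3)-3=(m-3)+3$, so the base is still extremal by Theorem~\ref{thm:base} and the graph is limited. The remaining conditions transfer as follows:
\begin{itemize}
\item[---] (S) and (W1) pass to the subgraph trivially.
\item[---] (W2') depends only on each edge's own opposite cells. For a surviving edge those cells lie in surviving rows, since the edge's rows survive, so their occupancy is unchanged.
\item[---] For (W3), each relevant pair consists of surviving edges, so all its opposite cells lie in surviving rows, and occupancy there is unchanged.
\item[---] For (W2), the dependency graph on surviving edges is the induced subgraph, because dependencies among surviving edges refer only to cells in surviving rows. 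Induced subgraphs of graphs with no forbidden cycles have none.
\end{itemize}
Hence $s-2\le s^*(m-3)$.

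\textbf{Main obstacle.} The delicate point is the row-degenerate case in Part 1, i.e.\ showing $e_a\ne e_b$. The intended reading is probably that a row-degenerate edge on a full row counts as internal full-full, so the hypothesis excludes it directly; I would check this against Lemma~\ref{lem:coldeg} and the definition of $L$.
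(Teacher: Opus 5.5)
Your proposal is correct and follows the paper's proof step for step: the local $H$--$F$--$H$ claim, the bound of $2L+q$ exceptional full rows, and the deletion of the three rows. Your argument that the two partner rows are distinct, and your condition-by-condition check that (S), (W1)--(W3) survive the deletion, fill in details the paper only asserts.

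The row-degenerate case you flag is not an obstacle. The edge $(r,a;r,b)$ has both halves in the full singleton row $r$, so it is counted in $L$ by definition, and Lemma~\ref{lem:internal} already treats row-degenerate edges as internal; the hypothesis therefore excludes it. Your (W3) detour could not have worked. The $5\times 3$ construction in the appendix has exactly such an edge, $(4,2;4,3)$, on the full row $4$, with $Q_2,Q_3$ both empty.
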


\begin{proof}
If a full singleton row $r$ is incident neither with an internal full-full 2-edge nor with a 2-edge whose other half is an occupied special cell, then by definition both 2-edges incident to $r$ have their other halves in half-full singleton rows. The row $r$ is full, and the two neighboring rows are half-full. The configuration is isolated in the row-incidence structure because $r$ has no other 2-edges and the half-full rows each have only one occupied free cell by definition; any additional 2-edge incident to them would use their occupied free cell, which is already used by the edge to $r$, or their unoccupied free cell, which would make them full. Hence the three rows form a removable $H-F-H$ configuration.

At most $2L+q$ full rows can be exceptional (internal full-full edges account for at most $2L$ rows, and edges to occupied special cells account for at most $q$ rows). Therefore $F>2L+q$ guarantees that such a row exists. Deleting the three rows and the two incident 2-edges leaves a weak admissible limited $(m-3)\times 3$ graph. Therefore $s-2\le s^*(m-3)$, proving $s\le s^*(m-3)+2$ for every peelable graph. \end{proof}

The following pointing lemma captures the local dependency structure around a full singleton row.

\begin{lemma}\label{lem:pointing}
Let $r$ be a full singleton row of color $c$, with free columns $a,b$. Let $e$ be a 2-edge containing $(r,a)$ whose other half is not in a full singleton row.
\begin{enumerate}
    \item[(i)] A crossing edge from a full singleton row cannot be column-degenerate.
    \item[(ii)] If the other half is an occupied special cell $Q_b$, then $e$ points to the edge $f$ through $(r,b)$.
    \item[(iii)] If the other half is $(u,b)$ in a half-full singleton row and $e\not\to f$, then the partner row has color $c$, necessarily $F_c=1$, and $Q_a,Q_b$ are empty.
    \item[(iv)] If the other half is $(u,c)$ in a half-full singleton row and $e\not\to f$, then the partner row has color $b$, necessarily $F_b=0$, and $Q_a,Q_c$ are empty.
\end{enumerate}
\end{lemma}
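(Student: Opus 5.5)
The plan is to prove each item by direct case analysis. Every step uses only the opposite-cell conventions, the fact that $r$ is full (so all three cells of row $r$ are occupied), and the known occupancy pattern of the special rows: $(S_x,y)$ is a 1-edge for $y\ne x$, and $Q_x=(S_x,x)$ is the unique free cell of $S_x$. The tools are exactly those of Lemmas~\ref{lem:coldeg}--\ref{lem:special}: mixed (W3) against a suitably chosen 1-edge, together with (W2'). Throughout, $f$ is the 2-edge through $(r,b)$.

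\textbf{(i) Column-degenerate crossing edge.} Suppose $e=(r,a;u,a)$ with $u$ not a full singleton row. I will choose a column $d\ne a$ such that $(u,d)$ is occupied.
\begin{itemize}
\item If $u$ is a singleton row of color $x$, then $x\ne a$ because $(u,a)$ is free, and I take $d=x$.
\item If $u=S_a$ (that is, the other half is $Q_a$), then any $d\ne a$ works.
\end{itemize}
Let $S$ be the special row whose support is $\{a,d\}$; this is not $S_a$, since the support of $S_a$ omits $a$. The 1-edge $(S,d)$ is vertex-disjoint from $e$. Its mixed opposite cells are $(S,a)$, which is a 1-edge, $(r,d)$, which is occupied because $r$ is full, and $(u,d)$, which is occupied by the choice of $d$. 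This violates (W3), exactly as in Lemma~\ref{lem:coldeg}.

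\textbf{(ii) Other half equal to $Q_b$.} Here $e=(r,a;S_b,b)$. Its opposite cells are $(r,b)$, which is a half of $f$, and $(S_b,a)$, which is a 1-edge because $S_b$ has support $\{a,c\}$. Both opposite cells are occupied and one of them is a half of $f$, so $e\to f$ by the definition of the dependency graph.

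\textbf{(iii) Other half $(u,b)$ in a half-full row.} The opposite cells of $e$ are $(r,b)$, which is a half of $f$, and $(u,a)$.
\begin{itemize}
\item Since $e\not\to f$, the cell $(u,a)$ must be unoccupied. So $u$ is free in both columns $a$ and $b$, and $u$ has color $c$.
\item To get $F_c=1$: if another full color-$c$ row $r'$ existed, test the 1-edge $(r',c)$ against $e$. The opposite cells $(r',a)$ and $(r',b)$ are occupied because $r'$ is full, and $(r,c)$, $(u,c)$ are 1-edges. This contradicts (W3).
\item To get $Q_a$ empty: test $(S_a,c)$ against $e$. All opposite cells other than $Q_a$ are 1-edges, namely $(S_a,b)$, $(r,c)$ and $(u,c)$.
\item To get $Q_b$ empty: test $(S_b,c)$ against $e$ in the same way.
\end{itemize}

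\textbf{(iv) Other half $(u,c)$ in a half-full row.} The opposite cells of $e$ are $(r,c)$, which is a 1-edge, and $(u,a)$.
\begin{itemize}
\item By (W2'), $(u,a)$ is not a 1-edge, so $u$ has color $b$. Since $u$ is half-full, $(u,a)$ is then unoccupied. In this case the hypothesis $e\not\to f$ is essentially automatic.
\item To get $F_b=0$: a full color-$b$ row $r'$ is necessarily different from $u$. Testing $(r',b)$ against $e$, the opposite cells $(r',a)$ and $(r',c)$ are occupied because $r'$ is full, $(r,b)$ is occupied, and $(u,b)$ is a 1-edge. This violates (W3).
\item To get $Q_a$ empty: test $(S_a,b)$ against $e$.
\item To get $Q_c$ empty: test $(S_c,b)$ against $e$.
\end{itemize}
In both of the last two tests, every other opposite cell is a 1-edge or the occupied cell $(r,b)$.

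\textbf{Main obstacle.} I expect the only delicate point to be bookkeeping rather than ideas. One must check, in each mixed test, that the chosen 1-edge really is vertex-disjoint from $e$: its row must differ from $r$ and $u$, and its column must avoid both columns of $e$. One must also list the opposite-cell set correctly in the degenerate situation of (i), where the 2-edge uses a single column.
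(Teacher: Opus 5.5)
Your proposal is correct and follows essentially the same route as the paper. You use the dependency definition for (ii) and for the non-pointing step of (iii), and (W2') to fix the color of $u$ in (iv). The remaining conclusions come from mixed (W3) tests against a full row's 1-edge or the special-row 1-edges $(S_a,\cdot)$, $(S_b,c)$, $(S_c,b)$. You make explicit two things the paper leaves implicit with ``as in Lemma~\ref{lem:qbound}'': the vertex-disjointness checks and the $u=S_a$ subcase of (i). Your remark that $e\not\to f$ holds automatically in (iv) is accurate.
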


\begin{proof}
Part (i): if $e$ were column-degenerate in column $a$, with other half $(u,a)$, a vertex-disjoint 1-edge from a suitable degree-2 row gives a mixed (W3) violation, as in Lemma~\ref{lem:qbound}.

Part (ii): the own opposite cells of $e$ are $(r,b)$ and $(S_b,a)$; both are occupied, and $f$ contains $(r,b)$, so $e\to f$.

Part (iii): suppose the other half is $(u,b)$ in a half-full row. One own opposite cell is $(r,b)$, occupied; the other is $(u,a)$, which must be unoccupied if $e\not\to f$. Hence $(u,b)$ is the occupied free cell and the 1-edge of $u$ lies in column $c$. If another full color-$c$ row existed, its 1-edge would violate (W3); hence $F_c=1$. Pairing $e$ with $(S_a,c)$ and $(S_b,c)$ forces $Q_a$ and $Q_b$ empty.

Part (iv): if the other half is $(u,c)$, then $(r,c)$ is an own opposite cell and is a 1-edge. By (W2'), $(u,a)$ is not a 1-edge, so $u$ has color $b$ and $(u,a)$ is unoccupied. If a full color-$b$ row existed, (W3) would be violated; hence $F_b=0$. The mixed (W3) pairs force $Q_a$ and $Q_c$ empty. \end{proof}

\begin{theorem}\label{thm:nonpeelable}
If a weak admissible limited graph is nonpeelable, then
\[
q=2\Rightarrow F\le 2,\qquad q=1\Rightarrow F\le 4,\qquad q=0\Rightarrow F\le 6.
\]
In particular, $F+q\le 6$ and every nonpeelable graph satisfies
\[
s\le \left\lfloor \frac{m+3}{2}\right\rfloor.
\]
\end{theorem}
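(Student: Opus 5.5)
The plan is to argue by contraposition from the peeling lemma, and then count exceptional full rows using the internal-edge lemmas. By the proof of Lemma~\ref{lem:peel}, a nonpeelable graph has no full singleton row that avoids both internal full-full 2-edges and 2-edges ending at an occupied special cell. So every full row is \emph{exceptional}, and
\[
F\le R_{\mathrm{int}}+q,
\]
where $R_{\mathrm{int}}$ is the number of full rows that are row vertices of internal edges. The crude estimate $R_{\mathrm{int}}\le 2L$ is not enough, so I will bound $R_{\mathrm{int}}$ directly from the structure given by Lemmas~\ref{lem:internal}, \ref{lem:coldeg} and \ref{lem:special}.

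\textbf{Case $q=2$.} Corollary~\ref{cor:specialL} gives $L=0$, so $R_{\mathrm{int}}=0$ and $F\le 2$.

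\textbf{Case $q=1$.} Say $Q_c$ is the occupied special cell. By Corollary~\ref{cor:specialL} and Lemma~\ref{lem:special}, every internal edge lies in the single class using columns $a,b$. By Lemma~\ref{lem:internal} this forces $F_c\in\{1,2\}$, and every full colour-$c$ row lies on every such edge.
\begin{itemize}
\item If $F_c=1$, the at most two edges share that row, so $R_{\mathrm{int}}\le 3$.
\item If $F_c=2$, the two edges cross-pair the two colour-$c$ rows with at most two further rows, so $R_{\mathrm{int}}\le 4$.
\end{itemize}
The crude count then gives $F\le 5$, one more than claimed. I expect this to be the main obstacle. To close it I will first analyse the unique full row $r$ whose edge ends at $Q_c$. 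By Lemma~\ref{lem:pointing}(ii), that edge points to the edge $f$ through the other free cell of $r$. I will try to show that $f$ cannot be internal: the mixed (W3) pairing with a special-row 1-edge, exactly as in Lemma~\ref{lem:special}, should then force an occupied special cell that must be empty. This would mean that $r$ is counted only when it coincides with an internally covered row, or else $R_{\mathrm{int}}\le 3$, giving $F\le 4$. If that fails, I will use (W2) on the resulting dependency chain.

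\textbf{Case $q=0$.} Here I count class by class. By Lemma~\ref{lem:coldeg} every internal edge is nondegenerate in columns. For each colour $c$, the class missing $c$ is nonempty only if $F_c\in\{1,2\}$, and then it covers the $F_c$ colour-$c$ rows together with rows of the other two colours. Since every full row has only two free cells, a row covered by internal edges of two classes uses up both of its halves. The aim is a double count of halves giving $R_{\mathrm{int}}\le 6$. Lemma~\ref{lem:Lbound} already settles the regime $F\ge 7$: there $L\le 4$, and I must show that the covered rows number fewer than $F$, which contradicts nonpeelability.

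\textbf{Conclusion.} In all three cases $F+q\le 6$. Substituting into (2) with $Z\ge 0$ and $N=m-3$ gives
\[
2s=N+F-Z+q\le m-3+6=m+3,
\]
hence $s\le\lfloor (m+3)/2\rfloor$.
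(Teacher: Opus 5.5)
\textbf{Verdict: genuine gap.} Your architecture matches the paper's. In a nonpeelable graph every full row is covered either by an internal full--full edge or by an edge ending at an occupied special cell, so $F\le R_{\mathrm{int}}+q$, and one then bounds $R_{\mathrm{int}}$ class by class. The gap is in that count: you misread Lemma~\ref{lem:internal} when $F_c=2$.

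\textbf{The misreading.} The lemma says that \emph{every} full colour-$c$ row is a row vertex of \emph{every} internal edge in the class missing $c$. A 2-edge has only two halves, so when $F_c=2$ each such edge joins exactly the two colour-$c$ rows; only the two cross-pairings are possible, and there are no ``further rows''. Consequently a nonempty class covers at most three full rows if $F_c=1$ and exactly two if $F_c=2$.

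\textbf{Case $q=1$.} The correct reading gives $R_{\mathrm{int}}\le 3$ and $F\le 4$ at once, so the obstacle you identify is an artefact of the misreading. Your proposed repair would not have closed it anyway. It is true that the edge through the other free cell of the $Q_c$-row is not internal: that cell lies in column $c$, and internal edges avoid column $c$ by Lemma~\ref{lem:special}. But this only shows that the row covered via $Q_c$ is not also internally covered. The bound $F\le R_{\mathrm{int}}+1$ is unchanged, so your $5$ does not become $4$. The fallback ``use (W2) on the dependency chain'' is not an argument.

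\textbf{Case $q=0$.} This case is left as a stated aim (``I must show that the covered rows number fewer than $F$'') rather than a proof. Your description of a class as covering the colour-$c$ rows ``together with rows of the other two colours'' repeats the same misreading. With the observation above, the missing step is short:
\begin{enumerate}
\item If $F\ge 7$, some colour $c$ has $F_c\ge 3$, so the class missing $c$ is empty by Lemma~\ref{lem:internal}.
\item Column-degenerate internal edges do not exist, by Lemma~\ref{lem:coldeg}.
\item Each of the at most two remaining classes covers at most three full rows.
\end{enumerate}
Hence $R_{\mathrm{int}}\le 6<F$, contradicting nonpeelability. The paper reaches the same bound via Lemma~\ref{lem:Lbound} and a split into $L\le 3$ and $L=4$.

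Once these two counts are supplied, your final computation from (2) is correct.
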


\begin{proof}
Let $R$ be the number of full rows incident with an internal full-full edge, and let $S$ be the number incident with an edge to an occupied special cell. If a full row lies outside these two sets, Lemma~\ref{lem:peel} applies. Thus nonpeelability implies
\[
F\le R+S,\qquad S\le q. \tag{6}
\]

If $q=2$, Corollary~\ref{cor:specialL} gives $L=0$, so $R=0$, and (6) gives $F\le 2$.

Suppose $q=1$, with $Q_c$ occupied. Every internal edge lies in the single class missing $c$ and there are at most two such edges. If $F_c=0$, there are no such edges. If $F_c=1$, the union of endpoints of the at most two internal edges has size at most three. If $F_c=2$, every internal edge uses exactly those two color-$c$ rows, so the endpoint union has size at most two. Hence always $R\le 3$, and with $S\le 1$ we obtain $F\le 4$.

Finally let $q=0$. If $F\le 6$ there is nothing to prove. If $F\ge 7$, Lemma~\ref{lem:Lbound} gives $L\le 4$. Some color occurs at least three times, so the internal class missing that color is empty. Thus at most two internal classes remain. If $L\le 3$, their edges touch at most $2L\le 6$ full rows. If $L=4$, both remaining classes have two edges. A two-edge class with one full row of its missing color touches at most three full rows, while with two missing-color rows it touches exactly those two. Hence in either case $R\le 6<F$, contradicting nonpeelability. Therefore $F\le 6$.

The three cases give $F+q\le 6$. Equation (2) then gives
\[
2s=N+F-Z+q\le (m-3)+6=m+3,
\]
which proves $s\le \lfloor (m+3)/2\rfloor$. \end{proof}

\begin{lemma}\label{lem:2mUB}
For every $m\ge 5$,
\[
z_{wL}(m,3)\le 2m.
\]
Equivalently, $s\le N=m-3$.
\end{lemma}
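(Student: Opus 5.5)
The plan is to reduce the bound to an inequality among the row counts $F,H,Z,q$ of Section~5. By (2), $2s=N+F-Z+q$. Substituting $N=F+H+Z$ from (1), the target $s\le N$ becomes
\[
q\le H+2Z .
\]
So it suffices to rule out every configuration with $q>H+2Z$. Since $q\le 2$ by Lemma~\ref{lem:qbound}, only the following cases can violate the inequality:
\[
(q,H,Z)\in\{(1,0,0),\ (2,0,0),\ (2,1,0)\}.
\]

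\textbf{Parity.} By (2), $2s=2F+H+q$, so $H+q$ is even. This excludes $(1,0,0)$ and $(2,1,0)$, because in both $H+q$ is odd. The only case left is $q=2$, $H=Z=0$, which means every singleton row is full and $F=N=m-3$.

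\textbf{The remaining case.} Corollary~\ref{cor:specialL} gives $L=0$ when $q=2$. Then Lemma~\ref{lem:cut} yields
\[
2F=2F-2L\le H+q=2,
\]
so $F\le 1$. Hence $m-3=N=F\le 1$, i.e.\ $m\le 4$, which contradicts $m\ge 5$. Therefore $q\le H+2Z$ always holds, so $s\le N=m-3$. The total edge count is then $|E_1|+s\le (m+3)+(m-3)=2m$.

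\textbf{Main obstacle.} I expected the hard step to be the case where every singleton row is full, since there one would seem to need a local (W3)/(W2) analysis of the 2-edges through the two occupied special cells. The parity observation reduces everything to $q=2$, and there the two earlier results (Corollary~\ref{cor:specialL} and Lemma~\ref{lem:cut}) close the case by pure counting. The only care needed is to confirm that Corollary~\ref{cor:specialL} and Lemma~\ref{lem:cut} use nothing beyond (W2') and mixed (W3), so the argument is valid for all $m\ge 5$ without any induction.
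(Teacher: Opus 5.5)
Your proof is correct and is essentially the paper's argument. The paper writes $2(s-N)=q-H-2Z$ and uses $q\le 2$ together with parity to force $q=2$, $H=Z=0$. It then closes with Corollary~\ref{cor:specialL} and the cut inequality (3) exactly as you do; your enumeration of the triples $(q,H,Z)$ followed by the parity of $H+q$ is an unpacked version of that same step.
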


\begin{proof}
From (1)-(2),
\[
2(s-N)=q-H-2Z. \tag{7}
\]
If $s>N$, the left-hand side is a positive even integer, so $q-H-2Z\ge 2$. Lemma~\ref{lem:qbound} gives $q\le 2$, hence necessarily
\[
q=2,\qquad H=Z=0,\qquad F=N.
\]
Corollary~\ref{cor:specialL} gives $L=0$, and the cut inequality (3) becomes
\[
2N=2F\le H+q=2.
\]
Thus $N\le 1$, contradicting $m\ge 5$. \end{proof}

\section{The Staircase Range and Boundary Seeds}

The following theorem gives the upper bound for the staircase range without any computation.

\begin{theorem}\label{thm:staircaseUB}
For every $9\le m\le 15$,
\[
s\le \left\lceil \frac{m}{2}\right\rceil+1.
\]
\end{theorem}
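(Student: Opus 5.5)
Let $s^*(m)$ denote the maximum number of 2-edges in a weak admissible limited $m\times 3$ graph, and set $g(m)=\lceil m/2\rceil+1$. I would prove $s^*(m)\le g(m)$ for $9\le m\le 15$ by strong induction on $m$, splitting every graph into the peelable and nonpeelable cases and using the machinery of Section~5. Each graph is either peelable or nonpeelable, so it suffices to bound $s$ in each case.

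\emph{Nonpeelable case.} Theorem~\ref{thm:nonpeelable} gives $s\le\lfloor (m+3)/2\rfloor$. For odd $m$ this equals $(m+3)/2=\lceil m/2\rceil+1=g(m)$. For even $m$ it equals $m/2+1=g(m)$. So the nonpeelable case matches the target exactly, with no further work.

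\emph{Peelable case.} Lemma~\ref{lem:peel} gives $s\le s^*(m-3)+2$. For $m-3\ge 9$, that is $m\ge 12$, the induction hypothesis gives
\[
s\le \lceil (m-3)/2\rceil+3 .
\]
Since $\lceil (m-3)/2\rceil=\lceil (m+1)/2\rceil-2$, this is at most $\lceil (m+1)/2\rceil+1$. That equals $g(m)$ when $m$ is even, but equals $g(m)+1$ when $m$ is odd. For $m\in\{9,10,11\}$ we have $m-3\in\{6,7,8\}$, and Lemma~\ref{lem:2mUB} gives $s^*(m-3)\le m-6$, so $s\le m-4$. This is at most $g(m)$ for $m=9,10,11$: respectively $5\le 6$, $6\le 6$, and $7\le 7$. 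Hence the peeling step closes for $m\le 11$ and for even $m\in\{12,14\}$. The odd cases $m=13,15$ remain, and they are exactly where peeling loses one unit.

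\emph{Main obstacle: odd $m=13,15$.} These are the ``two W2-sensitive boundary cases'' the paper mentions. Here I would show directly that a peelable graph with $s=g(m)+1=(m+5)/2$ cannot exist. Suppose it did. Then the peeled $(m-3)\times 3$ graph has $s^*(m-3)=g(m-3)=(m-1)/2$ 2-edges, so the residual graph is itself extremal. Also $2s=m+5=N+8$, and identity (2) forces
\[
F-Z+q=8 .
\]
So $F\ge 6$, and $F\ge 8$ when $q=0$. I would then use (5) together with $\delta=2M$ and the cut inequality (3) to pin down $(F,H,Z,q,L,M)$ to a few possibilities. Lemma~\ref{lem:Lbound} gives $L\le 4$, and Corollary~\ref{cor:specialL} forces $L\le 2$ when $q\ge1$, so that $2F\le H+q+2L$ becomes tight. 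I would pair this with the pointing Lemma~\ref{lem:pointing}. In a tight configuration every full row sends edges to half-full rows or to special cells. By parts (iii) and (iv), a non-pointing edge forces $F_c\le1$ or $F_b=0$, which is incompatible with $F\ge 6$ spread across three colors. So almost every full row carries edges $e\to f$ pointing through it. Chaining these pointings along the $H$–$F$–$H$ structure and the special cells should produce a directed cycle that is not a complementary 2-cycle, contradicting (W2). This last step is the hard one, and I expect it to need a short case analysis on $q\in\{0,1,2\}$ and on the color distribution $(F_1,F_2,F_3)$.
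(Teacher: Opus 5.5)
There is a genuine gap, caused by a parity slip in the peelable case. From $s\le s^*(m-3)+2\le \lceil (m+1)/2\rceil+1$ you conclude that this equals $g(m)$ for even $m$. In fact, for even $m$ one has $\lceil (m+1)/2\rceil=m/2+1$, so the bound is $m/2+2=g(m)+1$. For odd $m$ it is $(m+3)/2=g(m)$. Concretely, $g(9)+2=8=g(12)+1$ and $g(11)+2=9=g(14)+1$, whereas $g(10)+2=8=g(13)$. So the peeling induction fails exactly at $m=12$ and $m=14$, which your proposal declares closed. Also, $m=15$ peels down to the unproven case $m=12$, so it inherits the failure.

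The cases you single out as the obstacle ($m=13,15$) are not where the difficulty lies, and even there your sketch is not a proof. Lemma~\ref{lem:pointing}(iii)--(iv) only forces $F_c=1$ or $F_b=0$ for particular colors, which is perfectly compatible with $F\ge 6$. The directed-cycle conclusion is only conjectured. Note also that the paper's W2-sensitive boundary cases are $m=16,18$; no (W2) is used in the staircase range.

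The missing idea is that the tools you already list finish the job by counting alone, with no induction or peeling. Substituting $H=N-F-Z$ into the cut inequality (3) gives $3F+Z\le N+q+2L$, and $N\le 12$ when $m\le 15$. By Lemma~\ref{lem:qbound}, $q\le 2$. Suppose $D:=F-Z+q=2s-N\ge 7$.
\begin{itemize}
\item If $q=2$, then $L=0$ and $F\ge Z+5$, so $15\le 3F+Z\le 14$.
\item If $q=1$, then $L\le 2$ and $F\ge Z+6$, so $18\le 3F+Z\le 17$.
\item If $q=0$, then $F\ge 7$, Lemma~\ref{lem:Lbound} gives $L\le 4$, and $21\le 3F+Z\le 20$.
\end{itemize}
Hence $D\le 6$. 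Since $D\equiv N\pmod 2$, $D\le 5$ for even $m$. This yields $2s\le m+3$ for odd $m$ and $2s\le m+2$ for even $m$, i.e.\ $s\le\lceil m/2\rceil+1$ for every $9\le m\le 15$. This is precisely the paper's proof.

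Applied to your own target ($D=8$ for odd $m$), the same three-line computation gives an immediate contradiction. So the proposed (W2) cycle chase was never needed, and neither was the peelable/nonpeelable split.
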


\begin{proof}
Put $D:=F-Z+q=2s-N$. We first show $D\le 6$ for odd $m$ and $D\le 5$ for even $m$.

If $q=2$, Corollary~\ref{cor:specialL} gives $L=0$. The cut inequality yields
\[
3F+Z\le N+2\le 14.
\]
If $D\ge 7$, then $F-Z\ge 5$, hence $3F+Z\ge 15$, a contradiction.

If $q=1$, Corollary~\ref{cor:specialL} gives $L\le 2$. The cut inequality yields
\[
3F+Z\le N+1+2L\le 17.
\]
If $D\ge 7$, then $F-Z\ge 6$, hence $3F+Z\ge 18$, impossible.

If $q=0$ and $F\ge 7$, Lemma~\ref{lem:Lbound} gives
\[
3F+Z\le N+8\le 20,
\]
whereas $F\ge 7$ gives $3F+Z\ge 21$. Thus $F\le 6$ and again $D\le 6$.

Since $D=2s-N$, $D$ has the same parity as $N$. If $m$ is odd, $N$ is even and
\[
2s=N+D\le N+6=m+3,
\]
so $s\le (m+3)/2=\lceil m/2\rceil+1$. If $m$ is even, $N$ is odd, so $D$ is odd and $D\le 5$; hence
\[
2s\le N+5=m+2,
\]
which gives $s\le m/2+1=\lceil m/2\rceil+1$. \end{proof}

We need the following auxiliary lemma for the boundary cases.

\begin{lemma}[Internal two-edge class lemma]\label{lem:twoedgeclass}
Suppose an internal two-column class uses columns $a,b$, misses color $c$, contains two edges, and there is exactly one full color-$c$ row $r$. Then these two edges form a forbidden non-complementary directed 2-cycle.
\end{lemma}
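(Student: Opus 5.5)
The plan is to pin down the two edges completely, using that the row $r$ is full and is the only full color-$c$ row, and then read off the dependency arrows directly from the definition.

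\textbf{Step 1: both edges pass through $r$, one in each free cell.} By Lemma~\ref{lem:internal}, every full color-$c$ row is a row vertex of every edge in the class, so $r$ lies on both edges. The free cells of $r$ are $(r,a)$ and $(r,b)$. By (S) the two edges use distinct cells, so one edge contains $(r,a)$ and the other contains $(r,b)$.

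\textbf{Step 2: both edges are nondegenerate.} Lemma~\ref{lem:coldeg} rules out column-degenerate internal edges. A row-degenerate edge at $r$ would occupy both $(r,a)$ and $(r,b)$, leaving no free cell of $r$ for the second edge. Hence we may write
\[
e_1=(r,a;u,b),\qquad e_2=(r,b;v,a),
\]
where $u$ and $v$ are full singleton rows different from $r$.

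\textbf{Step 3: the colors of $u$ and $v$.} This is the step where the hypothesis ``exactly one full color-$c$ row'' enters, and I expect it to be the only real point of the proof.
\begin{itemize}
\item Since $(u,b)$ is free, $u$ does not have color $b$.
\item If $u$ had color $c$, it would be a second full color-$c$ row, contradicting the hypothesis.
\item Hence $u$ has color $a$, so $(u,a)$ is a 1-edge.
\end{itemize}
The symmetric argument applied to $e_2$ shows that $v$ has color $b$, so $(v,b)$ is a 1-edge. In particular $u\ne v$.

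\textbf{Step 4: the arrows.} The opposite cells of $e_1$ are $(r,b)$ and $(u,a)$. The cell $(r,b)$ is a half of $e_2$ and $(u,a)$ is a 1-edge, so both are occupied and one of them is a half of $e_2$; by definition $e_1\to e_2$. Likewise, the opposite cells of $e_2$ are $(r,a)$, a half of $e_1$, and $(v,b)$, a 1-edge, so $e_2\to e_1$. Thus $e_1,e_2$ form a directed 2-cycle.

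\textbf{Step 5: the cycle is not complementary.} The complement of $e_1$ is $(r,b;u,a)$. But $(u,a)$ is a 1-edge, so it cannot be a half of any 2-edge. Equivalently, $e_2$ contains $(v,a)$ with $v\neq u$, so $e_2$ is not the complement of $e_1$. Therefore the 2-cycle is not one of the exceptions allowed by (W2), and it is forbidden, as claimed.
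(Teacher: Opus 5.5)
Your proof is correct and follows essentially the same route as the paper: Lemma~\ref{lem:internal} places $r$ on both edges, row-degeneracy is ruled out, the two edges split $(r,a)$ and $(r,b)$ and point at each other, and uniqueness of the full color-$c$ row makes the partner rows distinct, so the 2-cycle is not complementary. The only cosmetic difference is that you determine the partner colors ($u$ of color $a$, $v$ of color $b$) to show the opposite cells are occupied. The paper instead notes that every cell of a full endpoint row is occupied, and it derives $u\neq v$ directly from the uniqueness of $r$.
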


\begin{proof}
By Lemma~\ref{lem:internal}, $r$ is a row vertex of both edges. A row-degenerate edge at $r$ would consume both free cells and leave no cell for the second edge, so both edges are nondegenerate. Simplicity makes one use $(r,a)$ and the other $(r,b)$. The opposite cells of each edge are occupied and include a half of the other edge, so the two edges point to each other. Their other row vertices are distinct (otherwise that row would have both free cells occupied and would be a second full color-$c$ row, contradicting the hypothesis that there is exactly one), hence the pair is not complementary. This is a forbidden directed 2-cycle. \end{proof}

The induction for the dense formula requires three residue-class seeds. The values at $m=14,16,18$ are established analytically.

\begin{theorem}\label{thm:boundary}
\[
s^*(14)=8,\qquad s^*(16)=9,\qquad s^*(18)=10.
\]
\end{theorem}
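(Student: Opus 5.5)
The lower bounds come from earlier results, so the work is in the upper bounds. Corollary~\ref{cor:combinedLB} gives $z_{wL}(m,3)\ge m+3+\lfloor (2m-4)/3\rfloor$, that is, $s^*(14)\ge 8$, $s^*(16)\ge 9$ and $s^*(18)\ge 10$. For $m=14$ the upper bound $s^*(14)\le 8$ is exactly Theorem~\ref{thm:staircaseUB}. So only $s^*(16)\le 9$ and $s^*(18)\le 10$ remain.

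I would work with $D=F-Z+q=2s-N$, as in the proof of Theorem~\ref{thm:staircaseUB}. For $m=16$ we have $N=13$, so $s\ge 10$ forces $D\ge 7$. For $m=18$ we have $N=15$, so $s\ge 11$ again forces $D\ge 7$. Peeling alone is too weak. Theorem~\ref{thm:nonpeelable} does dispose of nonpeelable graphs, since $s\le\lfloor (m+3)/2\rfloor$ gives $9$ and $10$. But Lemma~\ref{lem:peel} only yields $s\le s^*(13)+2=10$ and $s\le s^*(15)+2=11$. So I would not use the peeling bound. Instead I would rule out $D\ge 7$ directly, using the cut inequality (3) and the slack identity (4).

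For $m=16$, I split by $q$ (recall $q\le 2$ by Lemma~\ref{lem:qbound}).
\begin{itemize}
\item[$q=2$:] Here $L=0$ and $3F+Z\le N+2=15$, while $F-Z\ge 5$ gives $3F+Z\ge 15$. This forces $F=5$, $Z=0$, $H=8$.
\item[$q=1$:] Here $L\le 2$ and $3F+Z\le 14+2L\le 18$, while $F-Z\ge 6$. This forces $F=6$, $Z=0$, $L=2$.
\item[$q=0$:] Here $F\ge 7$, $L\le 4$ (Lemma~\ref{lem:Lbound}) and $3F+Z\le 21$. This forces $F=7$, $Z=0$, $L=4$.
\end{itemize}
In every case (3) holds with equality, so $\delta=0$ and hence $M=0$ by (4). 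Every 2-edge then has a half in a full row, and every half-full row and occupied special cell is matched to a full row.

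I would then kill each tight case with (W2).
\begin{itemize}
\item[$q=0$:] $L=4$ needs two internal two-edge classes. If either class has a unique full row of its missing colour, Lemma~\ref{lem:twoedgeclass} gives a forbidden 2-cycle. Otherwise both missing colours carry exactly two full rows, which with $F=7$ and $F_c\ge 3$ for the remaining colour pins down the colour counts $(3,2,2)$. I would then check the endpoint structure by hand.
\item[$q=1$:] Similarly $L=2$ puts both internal edges in one class. Either Lemma~\ref{lem:twoedgeclass} applies, or the two edges are the two cross-pairings on two full rows of the missing colour, which must then be complementary.
\item[$q=2$ (and the leftover configurations above):] Every full row not incident with an occupied special cell is an $H$-$F$-$H$ centre. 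Here I would use Lemma~\ref{lem:pointing}(ii)--(iv). Each such crossing edge either points to its sibling through the full row, or forces a colour with $F_c\le 1$ and empty special cells, which is incompatible with $q=2$. I would chain these pointings to produce a directed cycle that is not complementary.
\end{itemize}

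For $m=18$ the inequalities leave more room, since $N+2=17$. The cases are $q=2$ with $(F,Z)\in\{(5,0)\}$ and $F-Z\ge 5$, $3F+Z\le 17$; then $q=1$ with $3F+Z\le 20$; and $q=0$ with $3F+Z\le 23$. Each leaves $\delta\le 2$, so $M\le 1$. The same (W2) analysis runs with at most one edge of slack, which I would track explicitly. The main obstacle I expect is this W2-sensitive step: turning the tight counting configurations into a genuine non-complementary directed cycle. In particular, I need to exclude the possibility that all the cycles formed are complementary 2-cycles like those in Lemma~\ref{lem:seed}.

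For that exclusion I would argue as follows. A complementary pair occupies four cells in two rows, and both rows are full singletons of the third colour. Such a pair counts toward $L$ in a class whose missing colour has exactly two full rows. This uses up the whole $L$ budget of that class, which contradicts the tight equalities above.
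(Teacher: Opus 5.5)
Your counting is correct and matches the paper. For $m=16$ you obtain exactly the paper's profiles $(q,F,Z,L,M)=(2,5,0,0,0),(1,6,0,2,0),(0,7,0,4,0)$, and finishing with a (W2) contradiction is also the paper's plan. The gap is in that (W2) step, and the one concrete argument you give for it is false.

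A complementary pair does \emph{not} contradict the tight equalities. In the $q=0$ profile, once Lemma~\ref{lem:twoedgeclass} forces colour counts $(3,2,2)$, Lemma~\ref{lem:internal} makes each two-edge internal class the complementary pair on the two rows of its missing colour. In the $q=1$ profile with $F_c=2$, the two internal edges are again a complementary pair. These realize $L=4$ and $L=2$ exactly and are allowed by (W2), so both configurations survive your outline; your $q=1$ bullet stops precisely at one of them. Also, your way of excluding the non-pointing alternatives of Lemma~\ref{lem:pointing} (they require two empty special cells) only works when $q=2$.

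What is missing is the right target. You never need to rule out complementary pairs. You need a full row carrying only crossing edges at which pointing is forced in both directions. The resulting 2-cycle is automatically non-complementary: complementary edges share both row vertices, whereas the other halves of two crossing edges at a full row lie in distinct non-full rows.
\begin{itemize}
\item In the $(3,2,2)$ profile, the three colour-$c$ rows carry only crossing edges. The exceptions in Lemma~\ref{lem:pointing}(iii),(iv) would need $F_c=1$ or a vanishing colour count, so pointing is forced there.
\item For $q=1$, use the full row $r$ met by the $Q_c$-edge. By (W2'), $r$ does not have colour $c$, so its other free cell is in column $c$. That cell lies in a crossing edge, since internal edges avoid column $c$ by Lemma~\ref{lem:special}. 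The $Q_c$-edge points across $r$ by (ii), and both exceptions for the reverse edge require $Q_c$ to be empty.
\end{itemize}

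For $m=18$ you only promise to ``track the slack''. The profiles $(2,5,0,0,1)$, $(1,6,0,1,0)$, $(1,6,0,2,1)$, $(0,7,0,3,0)$, $(0,7,0,4,1)$ still have to be treated.
\begin{itemize}
\item In $(1,6,0,2,1)$ the $Q_c$-edge may be the unique outside--outside edge and then meets no full row. You must first force $F_c=2$ via Lemma~\ref{lem:twoedgeclass}, and then obtain pointing at the remaining full rows from $Q_c$ being occupied and $F_c>0$.
\item In $(0,7,0,3,0)$ you must show that, with $F_c\ge 3$, at least two colour-$c$ full rows carry only crossing edges.
\end{itemize}
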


\begin{proof}
The lower bounds follow from Corollary~\ref{cor:combinedLB}. It remains to exclude $s=10$ at $m=16$ and $s=11$ at $m=18$. For both targets, (2) gives
\[
F-Z+q=7. \tag{8}
\]
Using (5) and the slack identity, the possible aggregate profiles at the forbidden target are finite. For $m=16$, the profiles are
\[
(q,F,Z,L,M)=(2,5,0,0,0),\ (1,6,0,2,0),\ (0,7,0,4,0).
\]
For $m=18$, the profiles are
\[
(2,5,0,0,1),\ (1,6,0,1,0),\ (1,6,0,2,1),\ (0,7,0,3,0),\ (0,7,0,4,1).
\]

We eliminate each profile.

\medskip
\noindent\textbf{Case $q=2$:}
Corollary~\ref{cor:specialL} gives $L=0$. Every edge incident with a full row is crossing. Lemma~\ref{lem:pointing}(ii)--(iv) implies that it points across the full row: the exceptional non-pointing alternatives in (iii)--(iv) would force two special cells to be empty, impossible when $q=2$. Hence the two crossing edges at a full row form a forbidden non-complementary directed 2-cycle.

\medskip
\noindent\textbf{Case $q=1$, $M=0$:}
Let $Q_c$ be the occupied special cell. Its edge meets a full row $r$. The edge cannot be column-degenerate, and $r$ cannot have color $c$ by (W2'). One free cell of $r$ is used by the $Q_c$-edge; the other lies in a crossing edge (internal edges must use columns different from $c$, whereas the remaining free cell of $r$ lies in column $c$). The $Q_c$-edge points across $r$, and Lemma~\ref{lem:pointing} shows that the reverse pointing cannot fail without forcing $Q_c$ to be empty. Thus there is a forbidden non-complementary directed 2-cycle.

\medskip
\noindent\textbf{Case $m=18$, $(q,L,M)=(1,2,1)$:}
If the edge through $Q_c$ meets a full row, use the preceding case. Otherwise it is the unique outside-outside edge. Both internal edges then lie in the class missing $c$. If $F_c=1$, Lemma~\ref{lem:twoedgeclass} gives the forbidden 2-cycle. Hence $F_c=2$, so the two internal edges are the complementary pair on the two full color-$c$ rows. The remaining full rows have only crossing edges; since $Q_c$ is occupied and $F_c>0$, Lemma~\ref{lem:pointing} forces pointing across those rows, again giving forbidden non-complementary 2-cycles.

\medskip
\noindent\textbf{Case $q=0$, $L=4$:}
Some color $c$ occurs at least three times among the full rows, so the class missing $c$ is empty. The other two classes contain two edges each. Lemma~\ref{lem:twoedgeclass} forces the corresponding missing-color counts to be 2, giving full-color profile $(3,2,2)$ up to permutation. The three color-$c$ full rows therefore have only crossing edges (the two size-two internal classes are forced to use exactly the two full rows of their respective missing colors). Neither exceptional non-pointing pattern of Lemma~\ref{lem:pointing} is possible, so each such row gives a forbidden non-complementary directed 2-cycle.

\medskip
\noindent\textbf{Case $m=18$, $q=0$, $L=3$:}
Again choose $c$ with $F_c\ge 3$. The class missing $c$ is empty, so the other two internal classes have sizes 2 and 1. If $a$ is the missing color of the size-2 class, Lemma~\ref{lem:twoedgeclass} forces $F_a=2$. The single edge in the other class touches at most one color-$c$ full row, so at least two color-$c$ full rows have only crossing edges. Since all relevant full-color counts are positive and $F_c\ge 3$, Lemma~\ref{lem:pointing} forces pointing across each such row, yielding the forbidden 2-cycle. \end{proof}

\section{The Final Exact Formula}

We can now state and prove the main theorem.

\begin{theorem}\label{thm:main}
For every $m\ge 9$,
\[
z_{wL}(m,3)=m+3+\max\left\{\left\lceil \frac{m}{2}\right\rceil+1,\left\lfloor \frac{2m-4}{3}\right\rfloor\right\}.
\]
Equivalently,
\[
z_{wL}(m,3)=m+3+\left\lceil \frac{m}{2}\right\rceil+1 \quad (9\le m\le 15),
\]
and
\[
z_{wL}(m,3)=m+3+\left\lfloor \frac{2m-4}{3}\right\rfloor \quad (m\ge 16).
\]
\end{theorem}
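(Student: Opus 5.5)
Writing $z_{wL}(m,3)=m+3+s^*(m)$, the statement is equivalent to
\[
s^*(m)=\max\{\lceil m/2\rceil+1,\ \lfloor (2m-4)/3\rfloor\}\qquad (m\ge 9).
\]
I will prove the two inequalities separately and then check the two equivalent forms by comparing the two expressions.

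\textbf{Lower bound.} This is immediate from earlier results. Theorem~\ref{thm:staircaseLB} gives $s^*(m)\ge \lceil m/2\rceil+1$ for all $m\ge 9$. Corollary~\ref{cor:combinedLB} gives $s^*(m)\ge \lfloor(2m-4)/3\rfloor$ for all $m\ge 5$.

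\textbf{Upper bound for $9\le m\le 15$.} Theorem~\ref{thm:staircaseUB} gives $s^*(m)\le\lceil m/2\rceil+1$ on this range, which is what is needed there.

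\textbf{Upper bound for $m\ge 16$.} Here I claim $s^*(m)\le g(m):=\lfloor(2m-4)/3\rfloor$. I will argue by strong induction on $m$, stepping down by $3$, since $g(m)=g(m-3)+2$ exactly.
\begin{itemize}
\item[---] \emph{Base cases.} For the residues mod $3$, take $m\in\{16,17,18\}$. Theorem~\ref{thm:boundary} gives $s^*(16)=9=g(16)$ and $s^*(18)=10=g(18)$. For $m=17$, Theorem~\ref{thm:nonpeelable} is too weak for nonpeelable graphs, since it gives $s\le 10$ while $g(17)=10$. So I will use Lemma~\ref{lem:2mUB}-style parity together with Theorem~\ref{thm:staircaseUB}'s argument extended: $D=2s-N$ has the parity of $N=14$, and the cut-inequality analysis gives $D\le 6$ whenever $N+8\le 20$. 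That fails at $N=14$, so I will instead route $m=17$ through peeling. A peelable graph has $s\le s^*(14)+2=10=g(17)$. A nonpeelable graph has $s\le\lfloor 20/2\rfloor=10$.
\item[---] \emph{Inductive step.} For $m\ge 19$, split on whether $G$ is peelable. If $G$ is peelable, Lemma~\ref{lem:peel} gives $s\le s^*(m-3)+2\le g(m-3)+2=g(m)$. If $G$ is nonpeelable, Theorem~\ref{thm:nonpeelable} gives $s\le\lfloor (m+3)/2\rfloor$. I then need $\lfloor(m+3)/2\rfloor\le\lfloor(2m-4)/3\rfloor$ for $m\ge 19$. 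This is a routine check: the difference $(2m-4)/3-(m+3)/2=(m-17)/6$ is at least $1/3$ for $m\ge 19$, and checking residues mod $6$ handles the floors, including the tight case $m=19$, where $11\le 11$.
\end{itemize}
The inductive step is therefore clean once the base cases hold.

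\textbf{Main obstacle.} The real difficulty sits at $m=16$ and $m=18$, where the nonpeelable bound exceeds $g(m)$ by one. That gap is exactly what Theorem~\ref{thm:boundary} closes with (W2) arguments. I must make sure the induction never needs a nonpeelable bound at such a value beyond what is already proved. The only other delicate point is $m=17$, where both routes give exactly $10$, so the margin is zero and must be checked carefully.

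\textbf{Equivalent forms.} Finally, I compare $\lceil m/2\rceil+1$ with $\lfloor(2m-4)/3\rfloor$ over $9\le m\le 15$ and $m\ge 16$. This is a direct tabulation for small $m$ and the $(m-17)/6$ estimate for large $m$; at $m=16$ both equal $9$. This yields the two displayed formulas.
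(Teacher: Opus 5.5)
Your proposal is correct and is essentially the paper's proof: the lower bounds come from Theorem~\ref{thm:staircaseLB} and Corollary~\ref{cor:combinedLB}, and Theorem~\ref{thm:staircaseUB} handles $9\le m\le 15$. For $m\ge 16$ you use the seeds $s^*(14)=8$, $s^*(16)=9$, $s^*(18)=10$ and the peelable/nonpeelable dichotomy stepping by $3$; your ``base case'' $m=17$ is just the paper's inductive step from the seed $14$, where the nonpeelable bound $10=g(17)$ is exactly sufficient rather than ``too weak.'' One correction to your commentary only, which does not affect the argument: at $m=16$ and $m=18$ the nonpeelable bound $\lfloor (m+3)/2\rfloor$ already equals $g(m)$ ($9$ and $10$), so the real obstruction is the peelable route, since $s^*(13)+2=10$ and $s^*(15)+2=11$ exceed $g(m)$ by one, and this is why Theorem~\ref{thm:boundary} is needed there.
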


\begin{proof}
The lower bounds are given by Theorem~\ref{thm:staircaseLB} and Corollary~\ref{cor:combinedLB}. Theorem~\ref{thm:staircaseUB} gives the upper bound for $9\le m\le 15$, and the staircase construction attains it. At $m=14$, the two terms coincide, giving $s^*(14)=8$. Theorem~\ref{thm:boundary} gives $s^*(16)=9$ and $s^*(18)=10$.

Proceed by induction separately in the three residue classes modulo 3. The induction begins at the following seed values:
\[
\begin{array}{c|c}
m \pmod 3 & \text{first relevant value}\\ \hline
0 & 18\\
1 & 16\\
2 & 14
\end{array}
\]
Suppose the dense formula is known at $m-3$. If an extremal graph at $m$ is nonpeelable, Theorem~\ref{thm:nonpeelable} and the threshold table give $s\le \lfloor (2m-4)/3\rfloor$. If it is peelable, Lemma~\ref{lem:peel} gives
\[
s\le s^*(m-3)+2 = \left\lfloor \frac{2(m-3)-4}{3}\right\rfloor+2 = \left\lfloor \frac{2m-4}{3}\right\rfloor.
\]
The dense construction gives the reverse inequality. Thus the dense formula holds at $m$, proving the theorem. \end{proof}

\begin{corollary}
\[
\lim_{m\to\infty} \frac{z_{wL}(m,3)}{m} = \frac{5}{3}.
\]
\end{corollary}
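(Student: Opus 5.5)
The plan is to squeeze $z_{wL}(m,3)/m$ between two bounds that both tend to $5/3$, using the exact formula of Theorem~\ref{thm:main}. Since only the limit is asked for, I will not need the full floor arithmetic; it is enough to bound $z_{wL}(m,3)$ above and below by $\frac{5m}{3}$ plus a constant.

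\textbf{Lower bound.} Corollary~\ref{cor:combinedLB} already gives $\liminf_{m\to\infty} z_{wL}(m,3)/m\ge 5/3$. For explicitness, for $m\ge 5$ I will use
\[
z_{wL}(m,3)\ge m+3+\frac{2m-4}{3}-1=\frac{5m}{3}+\frac{2}{3}.
\]

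\textbf{Upper bound.} For $m\ge 16$, Theorem~\ref{thm:main} gives $z_{wL}(m,3)=m+3+\lfloor (2m-4)/3\rfloor$, so
\[
z_{wL}(m,3)\le m+3+\frac{2m-4}{3}=\frac{5m}{3}+\frac{5}{3}.
\]

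\textbf{Conclusion.} Dividing by $m$, both bounds tend to $5/3$, and the squeeze theorem gives the limit.

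There is no real obstacle here: all the substance sits in Theorem~\ref{thm:main}, that is, in the peeling induction and the boundary seeds at $m=14,16,18$. The one thing to check is that the formula valid for $m\ge 16$ covers a tail of all sufficiently large $m$, which it does, since the three residue-class inductions modulo $3$ start at $14$, $16$ and $18$.
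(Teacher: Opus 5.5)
Your squeeze argument is correct and matches the paper, which states this corollary as an immediate consequence of the exact formula in Theorem~\ref{thm:main} (lower bound from Corollary~\ref{cor:combinedLB}). Your explicit bounds $\frac{5m}{3}+\frac{2}{3}\le z_{wL}(m,3)\le \frac{5m}{3}+\frac{5}{3}$ for $m\ge 16$ simply write out that one-line deduction.
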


\begin{remark}
The two constructive terms behave as follows: the staircase term is strictly larger for $m=9,10,11,12,13,15$; the two terms tie for $m=14,16,17,18,19,21$; the dense term is strictly larger for $m=20$ (the first failure of the staircase equality) and for all $m\ge 22$.
\end{remark}

\section{The Gap to $z_L(m,3)$ and Computational Proof-Mining}

We briefly discuss the small gap between the weak-limited and original-limited numbers, and the role of computation in the discovery of the analytic proof.

By extending the exhaustive search for $z_L(m,3)$ from $m\le 6$ to $m\le 13$, we obtained the values listed in Section~3. The search uses the original strong admissibility conditions from \cite{QCX26,QCX26a}. For each canonical base type (determined by the singleton multiplicities), the enumeration considers all subsets of free cells of size $2s$; for each subset, it constructs the allowable-pair graph whose vertices are the chosen free cells and whose edges represent candidate 2-edges satisfying the strong conditions. The existence of a perfect matching in this graph is equivalent to realizing $s$ admissible 2-edges under the original strong definition. 

Comparing these with the weak-limited values reveals a small and structured gap:
\[
z_{wL}(m,3)-z_L(m,3)\in\{2,3\}\quad (7\le m\le 13),
\]
with gap sequence $2,2,3,2,3,2,3$. Equivalently, since each 2-edge occupies two buffer cells, the strong framework requires 4 or 6 additional buffer cells.

The MILP computations, while no longer part of the proof, were essential to the discovery process. By inspecting Farkas duals from infeasible MILP relaxations, the structural lemmas $q\le 2$, $q=2\Rightarrow L=0$, and $q=1\Rightarrow L\le 2$ were identified. The boundary cases at $m=16,18$ were then isolated as the only points where a local W2 argument was needed. Thus the computation served as a proof-mining tool: it identified which local inequalities were missing, but all numerical infeasibility statuses disappear from the final analytic argument.

\section{Open Problems}

The results of this paper raise several natural questions:

\begin{enumerate}
    \item \textbf{What are the exact values of $z_L(m,3)$ for all $m$?}
    We have determined $z_L(m,3)$ through $m=13$ and observed a gap of 2 or 3 in the computed range. Whether this gap persists asymptotically is an open problem.

    \item \textbf{What are the exact values of $\BSR(m,3)$?}
    Since $\BSR(m,n)\ge z_{wL}(m,n)$, our results give lower bounds. Determining whether these bounds are tight remains open.

    \item \textbf{Can the proof-mining approach be applied to other cases?}
    The success of using MILP-generated Farkas duals to identify structural lemmas suggests a general methodology for similar extremal problems.
\end{enumerate}

\section{Conclusion}

We have determined the exact values of $z_{wL}(m,3)$ for all $m\ge 3$:
\[
z_{wL}(m,3)=
\begin{cases}
m+3+\left\lceil \dfrac{m}{2}\right\rceil+1, & 9\le m\le 15,\\[2mm]
m+3+\left\lfloor \dfrac{2m-4}{3}\right\rfloor, & m\ge 16,
\end{cases}
\]
with $z_{wL}(3,3)=6$, $z_{wL}(4,3)=8$, and $z_{wL}(m,3)=2m$ for $5\le m\le 9$. In particular,
\[
\lim_{m\to\infty} \frac{z_{wL}(m,3)}{m} = \frac{5}{3}.
\]
The proof is fully analytic, relying on a uniform base classification, two constructive lower-bound families, and a sharp upper-bound argument based on a peeling lemma and the analysis of two W2-sensitive boundary cases. Numerical MILP computations were used only as proof-mining tools; the final theorem is unconditional.

This work was inspired by a collaboration that began with a shared memory of Professor Oleg Burdakov. We hope it serves as a fitting tribute to his legacy.

\bigskip

\noindent\textbf{Acknowledgement}
We are grateful to Professor Guangzhou Chen for drawing our attention to the identity $z(m,3)=m+3$ for $m\ge 3$ and to the modern reference \cite{CHM24}.
This work was partially supported by Jiangsu Provincial Scientific Research Center of Applied Mathematics (Grant No. BK20233002), Research Center for Intelligent Operations Research, The Hong Kong Polytechnic University (4-ZZT8), and the National Natural Science Foundation of China (Nos. 12171168, 12071159).

During the preparation of this work, the authors used OpenAI GPT-5.6 Sol for some proof exploration, hypothesis generation, and code development for proof verification. The authors subsequently reviewed, checked, and extended the generated material as needed and take full responsibility for the content of the publication.

\appendix
\section{Explicit Constructions for $5\le m\le 9$}
\label{app:constructions}

For completeness, we provide the explicit weak admissible constructions for the lower bounds in Theorem~\ref{thm:small} for $5\le m\le 9$. In each case, the row set is $S=[m]$ and the column set is $T=\{1,2,3\}$. The constructions for $m=5,6,7,8$ are from \cite{QCX26b}; the $m=9$ construction is the seed from Lemma~\ref{lem:seed}. The verification follows the same pattern as in Lemma~\ref{lem:seed}.

\subsection{The $5\times 3$ Construction}

\[
E_1=\{(1,1),(1,2),(2,1),(2,3),(3,2),(3,3),(4,1),(5,2)\},
\]
\[
E_2=\{(4,2;4,3),(5,1;5,3)\}.
\]

\subsection{The $6\times 3$ Construction}

\[
E_1=\{(1,3),(2,3),(3,1),(3,2),(4,2),(4,3),(5,1),(5,3),(6,1)\},
\]
\[
E_2=\{(1,1;2,2),(1,2;2,1),(6,2;3,3)\}.
\]

\subsection{The $7\times 3$ Construction}

\[
E_1=\{(1,2),(2,2),(3,1),(3,2),(4,3),(5,3),(6,1),(6,3),(7,2),(7,3)\},
\]
\[
E_2=\{(1,1;2,3),(1,3;2,1),(4,1;5,2),(4,2;5,1)\}.
\]

\subsection{The $8\times 3$ Construction}

\[
E_1=\{(1,2),(2,2),(3,1),(3,2),(4,3),(5,3),(6,1),(6,3),(7,2),(7,3),(8,1)\},
\]
\[
E_2=\{(1,1;2,3),(1,3;2,1),(4,1;5,2),(4,2;5,1),(8,2;8,3)\}.
\]

\subsection{The $9\times 3$ Construction}

This is the seed construction from Lemma~\ref{lem:seed}:

\[
E_1=\{(1,1),(1,2),(2,1),(2,3),(3,2),(3,3),(4,1),(5,1),(6,2),(7,2),(8,3),(9,3)\},
\]
\[
E_2=\{(4,2;5,3),(4,3;5,2),(6,1;7,3),(6,3;7,1),(8,1;9,2),(8,2;9,1)\}.
\]

\subsection{Summary Table}

The following table summarizes the constructions:

\[
\begin{array}{c|c|c|c|c}
m & |E_1| & |E_2| & z_{wL}(m,3) & \text{Buffer cells}\\
\hline
5 & 8 & 2 & 10 & 3\\
6 & 9 & 3 & 12 & 3\\
7 & 10 & 4 & 14 & 3\\
8 & 11 & 5 & 16 & 3\\
9 & 12 & 6 & 18 & 3
\end{array}
\]

The verification of weak admissibility for each construction follows the same pattern: the 2-edges form complementary pairs or isolated edges with unoccupied opposite cells, and every vertex-disjoint mixed pair has at least one unoccupied opposite cell. The 1-edge graphs are $C_4$-free by construction, as they consist of three degree-2 rows with distinct supports and singleton rows.

\end{document}